\newcommand{\cn}{\mathcal{C}}
\newcommand{\bcn}{\overline{\mathcal{C}}}
\newcommand{\hm}{\mathcal{H}}
\newcommand{\x}{\mathbf{x}}
\newcommand{\A}{\mathbf{A}}
\newcommand{\indI}{\mathcal{I}}
\newcommand{\isom}{\rho}
\DeclareMathOperator{\Ker}{Ker}
\DeclareMathOperator{\Img}{Im}
\newtheorem{theorem}{Theorem}
\newtheorem{definition}{Definition}
\begin{document}

\title{A volume form on the Khovanov invariant}

\author{Juan Ortiz-Navarro}

\email{juorna@gmail.com}

\address{Department of Mathematics, Hylan Building, University of Rochester,
Rochester, NY, 14627}

\begin{abstract}
The Reidemeister torsion construction can be applied to the chain
complex used to compute the Khovanov homology of a knot or a link.
This defines a volume form on Khovanov homology. The volume form transforms
correctly under Reidemeister moves to give an invariant volume on
the Khovanov homology. In this paper, its construction and invariance
under these moves is demonstrated. Also, some examples of the invariant
are presented for particular choices for the bases of homology groups
to obtain a numerical invariant of knots and links. In these examples,
the algebraic torsion seen in the Khovanov chain complex when homology
is computed over $\mathbb{Z}$ is recovered.

\end{abstract}
\maketitle

\section{Introduction}

In the 1930s, W. Franz \cite{Franz1935} and K. Reidemeister \cite{Reidemeister1935}
introduced the theory of torsion (also called R-torsion) of a cellular
complex in their study of lens spaces. The lens spaces $L(p,q)$ for
$p$ fixed have the same homology groups but they are not all homeomorphic.
In some cases they are not even homotopy equivalent. The Reidemeister
torsion captures some of the interactions that happen under the radar
and helps distinguish between many of these spaces. Since Khovanov
Homology (cohomology) is defined on a cochain complex, we will talk
about Reidemeister torsion on cochain complexes for the remainder
of this paper.

Reidemeister Torsion is defined for cochain complexes over a field
$\mathbb{F}$ (or, more general, over an asociative ring with multiplicative
identity). Over a field, cochain groups become vector spaces. Given
an $n$ dimensional vector space $V$, a multilinear function $T:V^{k}\rightarrow\mathcal{\mathbb{F}}$
is called a $k$-$\mathit{tensor}$. The set of all alternating $k$-tensors,
denoted $\Lambda^{k}(V)$, is a vector space over $\mathcal{\mathbb{F}}$
of dimension $\binom{n}{k}$. A nonzero element of $\Lambda^{n}(V)$
is called a $\mathit{volume}$ $\mathit{form}$ for the space $V$.

Reidemeister torsion for a cochain complex $C$ defines a volume form
on the space:\begin{equation}
(C^{0})^{*}\oplus C^{1}\oplus(C^{2})^{*}\oplus\dotsc C^{m}(\text{or }(C^{m})^{*})\end{equation}
 where the last component of this sum (whether we add the vector or
its dual) depends upon the parity of $m$. 

Each basis for a vector space yields a volume form in such a way that
if we change basis, the volume form is transformed by the determinant
of the change of basis matrix between the two bases. Having a volume
form on the space described above and denoting cohomology groups by
$H^{r}$, one can define a volume form on the space:\begin{equation}
(H^{0})^{*}\oplus H^{1}\oplus(H^{2})^{*}\oplus\dotsc H^{m}(\text{or }(H^{m})^{*})\end{equation}
In this sense, for acyclic cochain complexes, Reidemeister torsion
is a volume form on the $0$ vector space. In this case $\Lambda^{0}(0)=\mathcal{\mathbb{F}}$,
i.e., we obtain an element of $\mathcal{\mathbb{F}}$ that depends
only on the bases specified for the chain groups. For non-acyclic
cochain complexes, the torsion depends on the bases for the cochain
groups as well as the bases specified for the homology groups.

In 2001, Mikhail Khovanov \cite{Khovanov1999} presented a new theory,
which assigns homology groups to a knot diagram. His theory sparked
new questions in knot theory, as well as, proposed new ways of attacking
old problems in topology. Khovanov theory constructs a complex from
a diagram of a link. The cohomology groups obtained from this complex
are invariant under Reidemeister moves in the diagram, which makes
them a topological invariant of the link or knot under study. Cochain
groups in this complex are made of tensor products of a graded algebra
over a ring and coboundary operators arise from operations in the
algebra. The complex itself decomposes as subcomplexes that preserve
the grading in the algebra. From this point of view, Khovanov homology
recovers other invariants of links, such as the well-known Jones polynomial
introduced by Jones in \cite{Jones1985}.

In this paper, we construct a topological invariant for knots and
links using Reidemeister torsion, which gives a well defined volume
form for the Khovanov homology. We showed this by demonstrating the
volume form is preserved under Reidemeister moves by looking at the
cochain maps inducing isomorphisms in homology for each of this moves,
as presented in \cite{Khovanov1999}. We also demonstrate that, for
acylic subcomplexes, the Reidemeister torsion gives an invariant number
for knots and links.

In Section \ref{sec:R-torsion}, we present a brief introduction to
Reidemeister torsion. Section \ref{sec:Khovanov-Homology} serves
as an introduction to Khovanov homology. In Section \ref{sec:main},
we demonstrate the volume form on Khovanov homology. In Section \ref{sec:Examples},
we calculate the form for a simple example and show tables for some
knots and links for a very special choice of bases for the cohomology
groups.

\section{R-torsion}

\label{sec:R-torsion}

As defined in \cite{Turaev2001}.

\subsection{Definition for acyclic complexes}

Let $\mathbb{F}$ be a field and $D$ be a finite-dimensional vector
space over $\mathbb{F}$. Suppose that $\dim D=k$ and pick two (ordered)
bases $b=(b_{1},\dotsc,b_{k})$ and $c=(c_{1},\dotsc,c_{k})$ of $D$.
Let \[
b_{j}=\sum_{i=1}^{k}a_{ij}c_{i},\qquad j=1,\dotsc,k,\]
 The matrix $(a_{ij})_{i,j=1,\dotsc,k}$ is called the \textit{transition
matrix} between the basis $b$ and $c$ and it is a nondegenerate
$(k\times k)$ - matrix over $\mathbb{F}$. We write \[
[b/c]=\det(a_{ij})\thickspace\in\mathbb{F}^{*}=\mathbb{F}-0\]
 One can show that the relation, $b\sim c$ if and only if $[b/c]=1$,
is an equivalence relation.

Let $C$, $D$ and $E$ be vector spaces over $\mathbb{F}$. Let\[
0\rightarrow C\xrightarrow{i}D\xrightarrow{\pi}E\rightarrow0\]
be a \textit{}short exact sequence of vector spaces. Then $\dim D=\dim C+\dim E$.
Let $c=(c_{1},\dotsc,c_{k})$ be a basis for $C$, $d=(d_{1},\dotsc,d_{k})$
be a basis for $D$ and $e=(e_{1},\dotsc,e_{l})$ be a basis for $E$.
Since $\beta$ is surjective, we may lift each $e_{i}$ to some $\tilde{e}_{i}\in D$,
such that $\beta(\tilde{e}_{i})=e_{i}$. We will call $\tilde{e}=(\tilde{e}_{1},\dotsc,\tilde{e}_{l})$
a \textit{pullback} for $e$. We set \[
c\medspace\tilde{e}=(c_{1},\dotsc,c_{k},\tilde{e}_{1},\dotsc,\tilde{e}_{l}).\]
 Then $c\medspace\tilde{e}$ is a basis for $D$.

Let \[
C=(0\rightarrow C^{0}\xrightarrow{\partial^{0}}C^{1}\rightarrow\dotsb\xrightarrow{\partial^{m-2}}C^{m-1}\xrightarrow{\partial^{m-1}}C^{m}\rightarrow0)\]
 be an acylcic based cochain complex over $\mathbb{F}$. Set $B^{r-1}=\Img(\partial^{r-1}:C^{r-1}\rightarrow C^{r})\subset C^{r}$.
Since $C$ is acyclic, \[
C^{r}/B^{r-1}=C^{r}/\ker(\partial^{r}:C^{r}\rightarrow C^{r+1})\cong im\medspace\partial^{r}=B^{r}.\]
 In other words, the sequence \[
0\rightarrow B^{r-1}\hookrightarrow C^{r}\xrightarrow{\partial^{r}}B^{r}\rightarrow0\]
 is exact. Choose a basis $b^{r}$ of $B^{r}$ for $r=-1,\dotsc,m$.
By the above construction, $b^{r-1}\medspace\tilde{b}^{r}$ is a basis
of $C^{r}$, which can be compared with the basis $c^{r}$ of $C^{r}$.

\begin{definition} The Reidemeister torsion of $C$ is \[
\tau(C)=\biggl\lvert\prod_{r=0}^{m}[b^{r-1}\medspace\tilde{b}^{r}/c^{r}]^{(-1)^{r+1}}\biggr\rvert\in\mathbb{F}^{*}.\]
 \end{definition}

\textbf{Remarks:} (see \cite{Turaev2001}) 

\begin{itemize}
\item $\tau(C)$ does NOT depend on the choice for $b^{r}$ and its pullback
$\tilde{b}^{r}$. 
\item $\tau(C)$ DOES depend on $c^{r}$, which is called the \textit{distinguished
basis} for the cochain group $C^{r}$. 
\item If another basis for the cochain groups is equivalent to the distinguished
basis then $\tau(C)$ is the same for both bases. Indeed, if $C'$
is the same acyclic cochain complex $C$ based $c'=(c'^{0},\dotsc,c'^{m})$,
then\begin{equation*}
\tau(C')=\tau(C)\prod^m_{i=0} [c^i / c'^i]^{(-1)^{i+1}}.
\end{equation*}
\end{itemize}

\subsection{R-torsion as a volume form}

R-torsion can be understood as a volume form on the space:\begin{equation}
(C^0)^* \oplus (C^1) \oplus (C^2)^* \oplus \dotsb \oplus (C^m) \text{ (or } (C^m)^* \text{)}
\end{equation} where ${(C}^{r})^{*}=Hom(C^{r},\mathbb{F})$ is the dual of $C^{r}$,
and the last component $(C^{m})$ (or its dual) depends on the parity
of $m$.

Consider the cochain complex $(D,\Delta)$ (not necessarily acyclic),
then by the $1$st isomorphism theorem: \[
0\rightarrow\ker\Delta^{i}\hookrightarrow D^{i}\xrightarrow{\Delta^{i}}\Img\Delta^{i}\rightarrow0\]
 is exact. Moreover, \[
0\rightarrow B^{i-1}\hookrightarrow\ker\Delta^{i}\xrightarrow{\pi}H^{i}\rightarrow0\]
 is also exact. Thus, having basis $b^{i}$ for $B^{i}=\Img\Delta^{i}$
(as before) and $[h^{i}]$ for $H^{i}$ for every $i$, we obtain
the basis $[b^{i-1}\medspace h^{i}]$ for $\ker\Delta^{i}$, where
$\pi(h^{i})=[h^{i}]$. Furthermore, $[b^{i-1}\medspace h^{i}\medspace\tilde{b}^{i}]$
is a basis for $D^{i}$.

\begin{definition} The Reidemeister torsion of $D$ is \[
\tau(D)=\biggl\lvert\prod_{r=0}^{m}[b^{r-1}\medspace h^{r}\medspace\tilde{b}^{r}/c^{r}]^{(-1)^{r+1}}\biggr\rvert\in\mathbb{F}^{*}.\]
 \end{definition}

\textbf{Remarks:} (see \cite{Turaev2001}) 

\begin{itemize}
\item $\tau(C)$ does NOT depend on the choice for $b^{r}$ and its pullback
$\tilde{b}^{r}$. 
\item Using the quotient map $\ker\Delta^{i}\xrightarrow{\pi}{H^{i}}$,
and a pullback for a cohomology basis, one can think of cohomology
as lying in the cochain group and the restriction of the volume form
on cohomology makes R-torsion a volume form for the vector space $(H^{0})^{*}\oplus H^{1}\oplus(H^{2})^{*}\oplus\dotsc H^{m}(\text{or }(H^{m})^{*})$,
(depending on the parity of $m$). 
\end{itemize}

\subsection{Mapping Cone}

Suppose there is a cochain map $\varphi:(C,\partial_{C})\rightarrow(D,\partial_{D})$
between two cochain complexes over a field. \begin{definition} The
mapping cone $m(\varphi)$ is a cochain complex $(E,\delta)$ with
cochain groups: \[
E^{r}=C^{r}\oplus D^{r-1}\]
 and coboundary operator: \[
\delta^{r}:E^{r}\rightarrow E^{r+1}\]
 so that for $c\in C^{r}$ and $d\in D^{r-1}$ \[
\delta^{r}(c,d)=\big{(}\partial_{C}^{r}(c),\partial_{D}^{r-1}(d)+(-1)^{r}\varphi_{r}(c)\big{)}\]
 or \[
\delta^{r}=\left(\begin{matrix}\partial_{C}^{r} & 0\\
(-1)^{r}\varphi^{r} & \partial_{D}^{r-1}\end{matrix}\right)\]
where this matrix acts on a vector whose first block represents an
element in $C^{r}$ and the second block represents an element in
$D^{r-1}$. (Similar notation should be understood similary through
the rest of this paper.) \end{definition} 

Note that the distinguished bases for the cochain groups of $C$ and
$D$ combine to give distinguished bases for the cochain groups of
$m(\varphi)$.

\subsection{Quasi-isomorphisms}

A cochain map $\varphi:(C,\partial_{C})\rightarrow(D,\partial_{D})$
between two cochain complexes over a field induces a well defined
map in cohomology, $\varphi_{*}^{r}:H^{r}(C)\rightarrow H^{r}(D)$,
\[
\varphi_{*}^{r}([h])=[\varphi^{r}(h)]\qquad\text{for}\quad[h]\in H^{r}(C)\]
 If this map is an isomorphism for every $r$, then the cochain map
is called a \textit{quasi-isomorphism}. Note that the mapping cone
of a quasi-isomorphism is acyclic. The short exact sequence: \[
0\rightarrow D^{r-1}\hookrightarrow D^{r-1}\oplus C^{r}\xrightarrow{\pi}C^{r}\rightarrow0\]
 induces a long exact sequence in cohomology: \[
\dotsb\rightarrow H^{r-1}(D)\rightarrow H^{r}(E)\rightarrow H^{r}(C)\xrightarrow{\varphi_{*}^{r}}H^{r}(D)\rightarrow H^{r+1}(E)\rightarrow\dotsb\]
 Since $\varphi_{*}^{r}:H^{r}(C)\cong H^{r}(D)$, one can decompose
the sequence to obtain: \[
0\rightarrow H^{r}(E)\rightarrow0\]
 to get $H^{r}(E)=0$ for every $r$.

In \cite{ChungLing2006}, Chung and Lin presented a theory for the
torsion of quasi-isomorphisms.

\begin{definition} The torsion $\tau(\varphi)$ of the quasi-ismorphism
$\varphi:(C,\partial_{C})\rightarrow(D,\partial_{D})$ is \begin{equation}
\tau(\varphi)=\biggl\lvert\prod_{r=0}^{m}\frac{[b^{r-1}\medspace h^{r}\medspace\tilde{b}^{r}/c^{r}]}{[b'^{r-1}\medspace\varphi_{*}^{r}(h^{r})\medspace\tilde{b'}^{r}/c^{r}]}^{(-1)^{r+1}}\biggr\rvert\label{torsion-quasi}\end{equation}
 where $b^{i}$ and $b'^{i}$ are bases for $\Img\partial_{C}$ and
$\Img\partial_{D}$ respectively. \end{definition}

\textbf{Remarks:} (see \cite{ChungLing2006}) 

\begin{itemize}
\item $\tau(\varphi)$ does NOT depend on the choice of basis for the cohomological
groups. 
\item Their definition coincides with the torsion of the acyclic mapping
cone of the quasi-isomorphism and is independent of the choice of
bases for the cohomology groups. 
\end{itemize}

\subsection{A special quasi-isomorphism}

Consider a quasi-isomorphism $\varphi:(C,\partial_{C})\rightarrow(D,\partial_{D})$
such that its matrix representation according to the distinguished
bases for all cochain groups for $C$ and $D$ is given by identity
matrices (not necessarily the identity map, since D and C may not
have the same cochain groups). Then, the coboundary operator $\delta^{r}$
of its mapping cone looks like: \begin{equation}
\delta^{r}=\left(\begin{matrix}\partial_{C}^{r} & 0\\
(-1)^{r}Id & \partial_{D}^{r-1}\end{matrix}\right)\end{equation}

Now: \[
\Img\delta^{r}=\Img\partial_{C}^{r}\bigoplus D^{r}\]
 Then $\mathcal{B}^{r}=(b_{C}^{r},0)\cup(0,d^{r})$ is a basis for
$\Img\delta^{r}$, where $b_{C}^{r}$ is a basis for $\Img\partial_{C}^{r}$
and $d^{r}$ is the distinguished basis for $D^{r}$. Then a pullback
is given by $\tilde{\mathcal{B}^{r}}=c^{r}$, where $c^{r}$ is the
distinguished basis for $C^{r}$. Therefore: \begin{equation}
[\mathcal{B}^{r-1}\medspace\tilde{\mathcal{B}^{r}}/c^{r}\oplus d^{r-1}]=\left\lvert\begin{matrix}* & Id\\
Id & 0\end{matrix}\right\rvert=\ 1\label{torsion-id-quasi}\end{equation}
 Thus the Reidemeister torsion of the quasi-isomorphism is $1$. Note
that in this case, using the isomorphism $\varphi_{*}$ and its dual
$\varphi_{*}^{*}$: \begin{equation}
(H_{C}^{0})^{*}\oplus H_{C}^{1}\oplus(H_{C}^{2})^{*}\oplus\dotsc H_{C}^{m}(\text{or }(H_{C}^{m})^{*})\cong(H_{D}^{0})^{*}\oplus H_{D}^{1}\oplus(H_{D}^{2})^{*}\oplus\dotsc H_{D}^{m}(\text{or }(H_{D}^{m})^{*})\end{equation}
 and by \eqref{torsion-quasi}, the volume form is preserved on this
space generated from the cohomology groups.

\section{The Khovanov Chain Complex}

\label{sec:Khovanov-Homology}

As defined in \cite{Bar-Natan2002}.

All links are oriented in an oriented Euclidean space. We will present
them using projections to the plane. Let $D$ be a diagram of a link
$L$, $\mathcal{X}$ be the set of crossings of $D$, $n=|\mathcal{X}|$.
Let us number the elements of $\mathcal{X}$ from $1$ to $n$ and
write $n=n_{+}+n_{-}$ where $n_{+}$ ($n_{-}$) is the number of
right-handed (left-handed) crossings in $\mathcal{X}$.

\subsection{Spaces}

\begin{definition} Let $W=\bigoplus_{m}W_{m}$ be a graded vector
space with homogeneous components $\{ W_{m}\}$. The graded dimension
of $W$ is the power series $qdim\medspace W:=\sum_{m}q^{m}\dim W_{m}$.
\end{definition}

\begin{definition} If $W=\bigoplus_{m}W_{m}$ is a graded vector
space, we set $W\{ l\}_{m}:=W_{m-l}$, so that $qdim\medspace W\{ l\}=q^{l}\medspace qdim\medspace W$.
\end{definition}

\begin{definition} If $\bar{C}$ is a cochain complex $\ldots\to\bar{C}^{r}\overset{d^{r}}{\to}\bar{C}^{r+1}\ldots$
of vector spaces, and if $C=\bar{C}[s]$, then $C^{r}=\bar{C}^{r-s}$
(differentials also shifted). \end{definition}

Let $V$ be the graded vector space with two basis elements $x$ and
$1$ whose degrees are $-1$ and $1$ respectively, so that $qdimV=q+q^{-1}$.
For every vertex $\alpha\in\{0,1\}^{\mathcal{X}}$ of the cube $\{0,1\}^{\mathcal{X}}$,
we associate the graded vector space $V_{\alpha}(D):=V^{\otimes k}\{ r\}$,
where $k$ is the number of cycles in the smoothing of $D$ corresponding
to $\alpha$ and $r$ is the height $|\alpha|=\sum_{i}\alpha_{i}$
of $\alpha$. We set the $r$th cochain group $\lVert D\rVert^{r}$
(for $0\leq r\leq n$) to be the direct sum of all the vector spaces
at height $r$: $\lVert L\rVert^{r}:=\bigoplus_{\alpha:r=|\alpha|}V_{\alpha}(D)$.
Finally, we set $C(D):=\lVert D\rVert[-n_{-}]\{ n_{+}-2n_{-}\}$.

\begin{figure}[h]

\begin{centering}\includegraphics[scale=0.6]{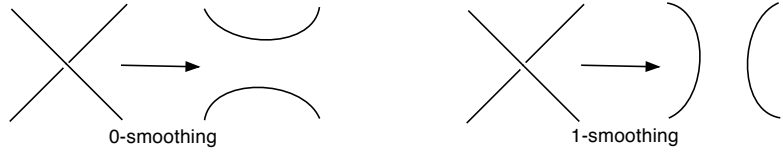} \par\end{centering}

\caption{Smoothing a crossing}
\end{figure}

\subsection{Distinguished Basis}

Each cochain group in the Khovanov complex has an ordered basis in
the following way. The vector space $V$ has $\{1,x\}$ as a basis.
Now, consider the tensor product of $r$ copies of $V$. Using reverse
lexicographic order, we get the ordered basis:

\begin{equation}
\begin{split}1\otimes1\otimes1 & \otimes\dotsc\otimes1\\
x\otimes1\otimes1 & \otimes\dotsc\otimes1\\
1\otimes x\otimes1 & \otimes\dotsc\otimes1\\
x\otimes x\otimes1 & \otimes\dotsc\otimes1\\
 & \vdots\\
1\otimes x\otimes x & \otimes\dotsc\otimes x\\
x\otimes x\otimes x & \otimes\dotsc\otimes x\end{split}
\end{equation}

For every vertex $\alpha$, $V_{\alpha}(D)$ is assigned such a basis,
so that the cochain group $\lVert D\rVert^{r}$ has basis consisting
of the injection of each of these bases. This basis will be the distinguished
basis for the $r$th cochain group.

\subsection{Maps}

The space $V_{\alpha}(D)$ on each vertex $\alpha$ has as many tensor
factors as there are components in the smoothing $S_{\alpha}$. Thus,
we put these tensor factors in $V_{\alpha}$ and cycles in $S_{\alpha}$
in bijective correspondence. Each edge $\xi$ of the cube maps the
vector spaces at its ends. For any edge $\xi$, the smoothing at the
tail of $\xi$ differs from the smoothing at the head of $\xi$ slightly:
either two of the components merge into one or one of the components
splits in two. So for any $\xi$, we set $d_{\xi}$ to be the identity
on the tensor factors corresponding to the components that don't participate,
and then we complete the definition of $d_{\xi}$ using two linear
maps: $m:V\otimes V\to V$ and $\Delta:V\to V\otimes V$ defined as
follows: \begin{align}
\big(V\otimes V\overset{m}{\rightarrow}V\big) & \quad & m: & \begin{cases}
1\otimes x\mapsto x & 1\otimes1\mapsto1\\
x\otimes1\mapsto x & x\otimes x\mapsto0\end{cases}\\
\big(V\overset{\Delta}{\rightarrow}V\otimes V\big) & \quad & \Delta: & \begin{cases}
1\mapsto1\otimes x+x\otimes1\\
x\mapsto x\otimes x\end{cases}\end{align}

The height $|\xi|$ of an edge $\xi$ is defined to be the height
of its tail. Hence, if the maps on the edges are called $\partial_{\xi}$,
then the vertical collapse of the cube to a cochain complex becomes
$\partial^{r}:=\sum_{|\xi|=r}(-1)^{\xi}\partial_{\xi}$, where $(-1)^{\xi}:=(-1)^{\sum_{i<j}\xi_{i}}$,
and $j$ is the location where the smoothings at the head and tail
of the edge differ.

\subsection{Khovanov Invariant}

Let $\hm^{r}(D)$ denote the $r$th cohomology group of $C(D)$. It
is a graded vector space. Let\begin{equation*}
Kh(L):=\sum_r t^r \text{qdim} \hm ^r(D)
\end{equation*}

\begin{theorem}
$\hm ^r(L)$ is an invariant for the link $L$ and $Kh(L)$ is a link invariant that specializes to the Jones polynomial at $t=-1$.
\end{theorem}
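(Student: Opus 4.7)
The plan is to split the theorem into two independent claims and handle them separately: (i) each graded group $\hm^{r}(D)$ depends only on the link $L$, not the diagram $D$; (ii) the polynomial $Kh(L)$ evaluated at $t=-1$ equals (a normalization of) the Jones polynomial. Claim (ii) is essentially combinatorial once the chain groups are understood, while claim (i) reduces to producing quasi-isomorphisms between the Khovanov complexes of diagrams differing by a single Reidemeister move.

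For the Jones specialization I would first compute the graded Euler characteristic
\begin{equation*}
Kh(L)\big|_{t=-1}\;=\;\sum_{r}(-1)^{r}\,\text{qdim}\,\hm^{r}(D).
\end{equation*}
Because the coboundary $\partial^{r}$ is homogeneous with respect to the internal $q$-grading, the right-hand side equals the graded Euler characteristic of the cochain complex $C(D)$ itself. Substituting $C(D)=\lVert D\rVert[-n_{-}]\{n_{+}-2n_{-}\}$ and using $\text{qdim}\,V_{\alpha}(D)=(q+q^{-1})^{k(\alpha)}q^{|\alpha|}$, the sum collapses to
\begin{equation*}
q^{n_{+}-2n_{-}}\sum_{\alpha\in\{0,1\}^{\mathcal{X}}}(-1)^{|\alpha|}\,(q+q^{-1})^{k(\alpha)}\,q^{|\alpha|},
\end{equation*}
which, after the standard change of variable, is precisely Kauffman's state sum for the unnormalized Jones polynomial; so once invariance under Reidemeister moves is known, this part is a bookkeeping verification.

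For the invariance claim I would, for each of the three Reidemeister moves, exhibit an explicit cochain map $\varphi\colon C(D)\to C(D')$ and show that it is a quasi-isomorphism by peeling off an acyclic subcomplex. Concretely, for R1 I would split the cube along the coordinate corresponding to the new crossing. One of the two resulting sub-cubes introduces an extra tensor factor of $V$; part of the associated mapping cone is killed by the multiplication $m$ (an isomorphism on the summand where one of the circles carries the generator $1$), leaving a quotient canonically isomorphic to $C(D')$ up to the grading shift absorbed by $\{n_{+}-2n_{-}\}$. For R2 the cube along the two new crossings has four vertices, and three of them assemble into an acyclic subcomplex via a pair of explicit contractions using $m$ and $\Delta$; the surviving vertex reproduces $C(D')$. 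These chain-level equivalences induce isomorphisms on $\hm^{r}$, proving invariance under R1 and R2.

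The main obstacle will be R3. Here the two diagrams have cubes of the same dimension but nontrivially different smoothings, so no single "extra crossing" gives a direct acyclic summand. The cleanest route is to reduce R3 to R2: apply the R2 simplification on one half of both $D$ and $D'$ and observe that the resulting complexes are isomorphic via a diagram-level identification of circles and tensor factors, then use the 2-out-of-3 property for quasi-isomorphisms to conclude. Verifying that the composed map is a genuine cochain map (the signs $(-1)^{\xi}$ must match after the reshuffling) is the delicate point, and I would follow Bar-Natan's tangle-local argument to avoid a brute-force sign computation. Given invariance under all three moves, the first assertion follows, and combined with part (ii) the full theorem is established.
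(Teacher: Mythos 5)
The paper does not prove this theorem itself --- the text immediately following the statement is simply ``see \cite{Khovanov1999}'', deferring entirely to Khovanov's original argument. Your outline matches that proof (and Bar-Natan's reformulation of it in \cite{Bar-Natan2002}), which is also exactly the source of the decompositions the paper borrows in Section~\ref{sec:main} to prove its own volume-form theorem. The Euler-characteristic computation giving the Jones specialization is standard bookkeeping as you say; for the Reidemeister moves, your idea of splitting off acyclic subcomplexes (the $X_2$ summand for R1, the $X_2,X_3$ summands for R2) and, for R3, applying the R2-style simplification to both sides and then identifying the surviving pieces, is precisely the $X_1\oplus X_2\oplus X_3$ versus $Y_1\oplus Y_2\oplus Y_3$ decomposition with $X_1\cong Y_1$ that the paper reproduces.

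Two small imprecisions, neither fatal: in the R2 step the surviving piece is not ``the surviving vertex'' of the cube but a twisted subcomplex $X_1=\{z+\alpha(z)\}$ living across two vertices; and for R3 what you invoke is not strictly the 2-out-of-3 property (there is no commuting triangle) but rather the observation that the composite $\bcn(D)\twoheadrightarrow X_1\cong Y_1\hookrightarrow\bcn(D')$ of a projection with acyclic complement, an isomorphism, and an inclusion with acyclic complement is a quasi-isomorphism. Also, your graded-Euler-characteristic formula should carry a sign $(-1)^{n_-}$ coming from the homological shift $[-n_-]$; with that correction the expression is Kauffman's bracket state sum.
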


see \cite{Khovanov1999}.

\section{The volume form}

\label{sec:main}

\begin{theorem} The Khovanov homology has a volume form which is
invariant for knots and links. \end{theorem}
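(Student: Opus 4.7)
The plan is twofold: first construct the volume form using the distinguished bases introduced in Section \ref{sec:Khovanov-Homology}, and then verify invariance under the three Reidemeister moves by exhibiting Khovanov's cochain maps between the two diagrams' complexes as quasi-isomorphisms that fall into (or decompose via) the special identity form of equation \eqref{torsion-id-quasi}.

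For the construction, I would apply the formalism of Section \ref{sec:R-torsion} directly to the Khovanov cochain complex $C(D)$. Each cochain group carries the distinguished basis $c^{r}$ built from the reverse lexicographic order on tensor monomials in $\{1,x\}$, injected across the vertices $\alpha$ at height $r$. With these distinguished bases and arbitrary choices of $b^{r}$ for the coboundaries and $h^{r}$ for cohomology, the torsion $\tau(C(D))$ from the second definition in Section \ref{sec:R-torsion} defines a volume form on $(\hm^{0})^{*}\oplus \hm^{1}\oplus(\hm^{2})^{*}\oplus\dotsb$, with the last summand dualized according to the parity of the top degree.

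For invariance, for each Reidemeister move relating diagrams $D$ and $D'$ Khovanov constructs in \cite{Khovanov1999} a chain-level quasi-isomorphism $\varphi:C(D)\to C(D')$ realizing the cohomology isomorphism. The strategy is to use equation \eqref{torsion-id-quasi}: whenever $\varphi$ is represented by identity matrices with respect to the distinguished bases (possibly after splitting off an acyclic summand), $\tau(\varphi)=1$, and $\varphi_{*}$ carries the volume form on the alternating space of $H^{*}(D)$ isomorphically to the one on $H^{*}(D')$, so the volume form survives the move.

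The main obstacle is that Khovanov's maps are not \emph{a priori} of this identity form. At each move, $C(D)$ decomposes as a direct sum of an acyclic contractible subcomplex $A$ and a subcomplex $C'$ that is identified isomorphically (up to degree shift) with $C(D')$, with $\varphi$ acting as the identity on the second summand. Thus, for each of $R_{1}$, $R_{2}$, and $R_{3}$, I would (i) write down Khovanov's decomposition $C(D)=A\oplus C'$ explicitly in terms of distinguished basis vectors; (ii) check that the change-of-basis matrix from $c^{r}$ to the direct-sum basis has triangular form with diagonal $\pm 1$, so that its contributions to the alternating product $\prod_{r}[\cdot]^{(-1)^{r+1}}$ have absolute value $1$; (iii) invoke the special quasi-isomorphism result to see that the identity-type portion contributes trivial torsion, while the acyclic summand $A$ contributes absolute value $1$ once its internal contracting homotopy is used to match images with pullbacks. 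Step (ii) is the most delicate verification, since the distinguished bases on $C(D)$ and on $A\oplus C'$ need not literally agree, and one must check that the sign pattern inherent in R-torsion cancels these determinant contributions. Once this is established for the three moves, the invariance of the volume form, and hence the theorem, follows.
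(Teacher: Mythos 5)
Your proposal matches the paper's strategy essentially step for step: define the torsion via the distinguished (reverse-lexicographic monomial) bases, and for each Reidemeister move use Khovanov's decomposition of the larger complex into an acyclic summand plus a copy of the smaller complex, check the change-of-basis determinant is $\pm 1$, show the acyclic summand has torsion of absolute value $1$, and then apply the identity-matrix quasi-isomorphism observation \eqref{torsion-id-quasi}. One small inaccuracy worth flagging: the change-of-basis matrices that arise in the paper (for instance the $4\times 4$ block for Reidemeister~I and the $8\times 8$ and $10\times 10$ blocks for Reidemeister~II) are not triangular as you anticipate — they are permutation-like with some extra entries — but they do have determinant $\pm 1$, which is all that is needed.
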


In this (long) section, invariance of the volume form is demonstrated
under each of the Reidemeister moves. To obtain this, it will be shown
that for each of the moves the mapping cone of the quasi-isomorphisms
relating each side of the move has Reidemeister Torsion $1$. 

As in \cite{Lee2005}. $\bcn(D)$ will denote the complex assign to
a diagram of a link prior to any shifts, i.e., \[
\cn(D)=\bcn(D)[-n_{-}]\{ n_{+}-2n_{-}\}\]

\subsection{Reidemeister I}

The Reidemeister I move has two variants.

\begin{figure}[h]
\begin{centering}\includegraphics[scale=0.6]{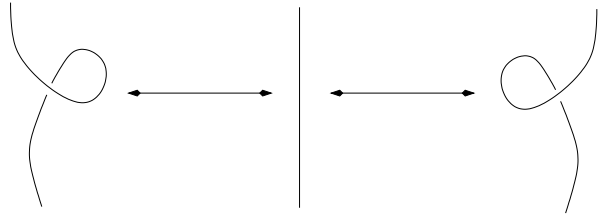} \par\end{centering}

\caption{Reidemeister I move}
\end{figure}

The left side (the negative kink) can be obtained by a series of Reidemeister
I (positive kink), Reidemeister II and Reidemeister III moves by the
Whitney trick. %
\begin{figure}[h]

\begin{centering}\includegraphics[scale=0.6]{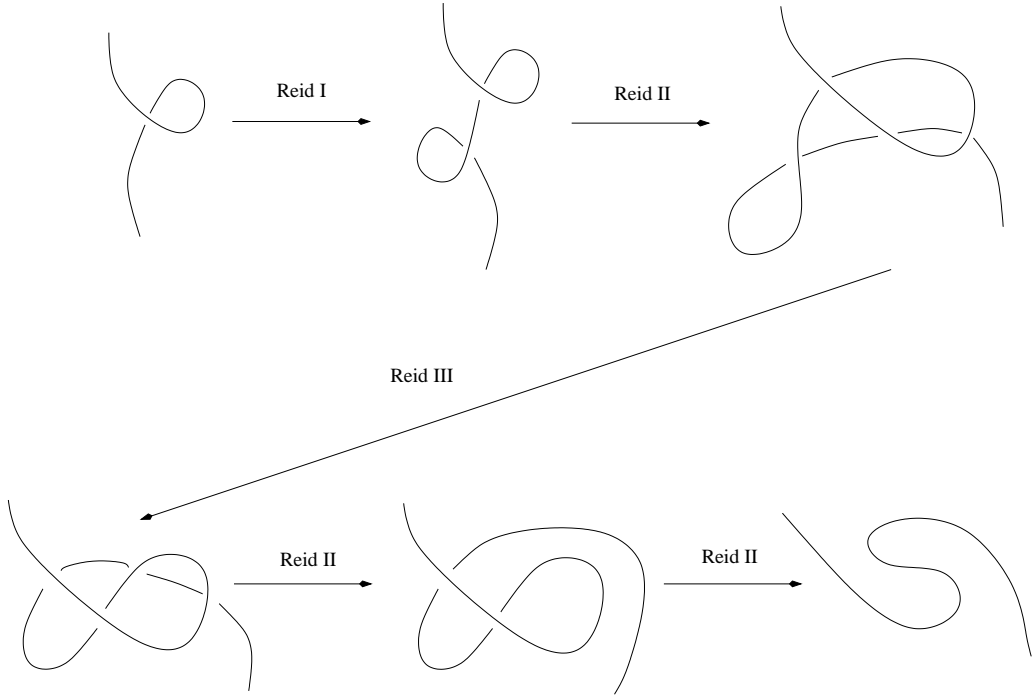} \par\end{centering}

\caption{Whitney trick}
\end{figure}
Hence, invariance for negative kinks follows from the Reidemeister
I (positive), Reidemeister II, and Reidemeister III moves.

\begin{figure}[h]

\begin{centering}\includegraphics[scale=0.6]{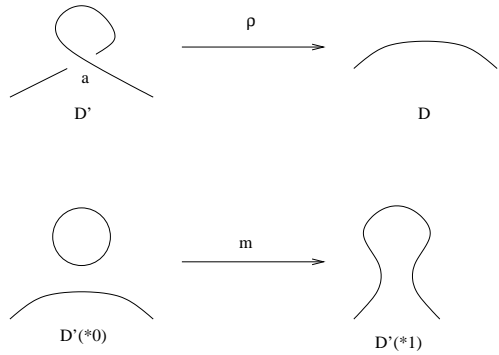} \par\end{centering}

\caption{Reidemeister I Invariance}
\end{figure}

Let $a$ be the crossing which appears only in $D'$. The set $\indI'$
of crossings of $D'$ is $\indI$, the set of crossings of $D$, followed
by $a$ as an ordered $|\indI'|$-tuple. Let $D'(*0)$ and $D'(*1)$
denote $D'$ with only its last crossing (that is $a$) resolved to
its 0- and 1-resolutions, respectively.

Consider $\partial'_{0\rightarrow1}:\bcn(D'(*0))\rightarrow\bcn(D'(*1))[-1]\{-1\}$,
which is always multiplication and define \[
X_{1}=\Ker\partial'_{0\rightarrow1}\]
 and \[
X_{2}=\{ y\otimes  1 + z|y\in\bcn(D),z\in\bcn(D'(*1))[-1]\{-1\}\}\textrm{ .}\]
$X_{1}$ and $X_{2}$ are subcomplexes of $\bcn(D')$. $\bcn(D')$
decomposes as $X_{1}\oplus X_{2}$ as a chain complex. (see \cite{Khovanov1999})

Now, consider the $r$-th cochain group of the complex $\bcn(D')^{r}$.
This space has a distinguished basis coming from the construction
of the complex as explained in Section (\ref{sec:Khovanov-Homology}).

As a basis for $X_{1}^{r}=\Ker\partial'{}_{0\rightarrow1}^{r}$, we
use $\{\dotsc\otimes1\otimes x-\dots\otimes x\otimes1,\dotsc\otimes x\otimes x\}$
and, for $X_{2}^{r}$, we use the subbasis of the distinguished basis
consisting of all basis elements with a $1$ in the last tensor for
components corresponding to states with a positive smoothing at the
crossing $a$ and all the basis elements for components corresponding
to states with a negative smoothing at the crossing $a$.

Using this ordered basis, the change of basis matrix between the basis
coming from $X_{1}^{r}$ and $X_{2}^{r}$ and the distinguished basis
for $\bcn(D')^{r}$ consists of diagonal blocks of the form: \begin{equation}
\left(\begin{tabular}{cccc}
 0  &  0  &  1  &  0 \\
-1  &  0  &  0  &  1 \\
1  &  0  &  0  &  0 \\
0  &  1  &  0  &  0 \end{tabular}\right)\end{equation}
for states with a positive marker at the crossing $a$. The states
with a negative marker at this crossing belong to the subcomplex $X_{2}$
and the basis is unchanged. In any case, the bases for the cochain
groups coming from the decomposition of the complex as subcomplexes
$X_{1}$ and $X_{2}$ are equivalent to the distinguished bases.

The subcomplex $X_{2}=\{ y\otimes 1+z|y\in\bcn(D),z\in\bcn(D'(*1))[-1]\{-1\}\}$
is acyclic. (see \cite{Khovanov1999}) Let's find its Reidemeister
torsion. We only need to find basis for the image of the boundary
operator and a pullback for this basis.

\begin{figure}[h]

\begin{centering}\includegraphics[scale=0.6]{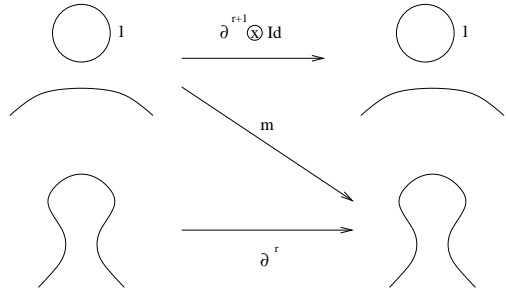} \par\end{centering}

\caption{$\partial'^{r}|_{X_{2}^{r}}:X_{2}^{r}\rightarrow X_{2}^{r+1}$}
\end{figure}

The image of $\partial'^{r}|_{X_{2}^{r}}$ consists of: \[
{\{(\partial^{r+1}(y)\otimes1,y),\medspace y\in\bcn(D)^{r+1}\}}\]
Thus, a basis $b'^{r}$ for $\Img\partial'^{r}$ is given by: \[
(b^{r+1}\otimes1,c^{r+1})\]
 where $b^{r+1}$ is a basis for $\partial^{r+1}$ on $(\bcn(D),\partial)$
and $c^{r+1}$ is the distinguished basis for $\bcn(D(*1))^{r+1}$.

A pullback $\tilde{b}'^{r}$ for this basis is $(c^{r+1}\otimes1,0)$.
Hence $[b'^{r-1}\medspace\tilde{b'^{r}}/\mathcal{X}_{2}^{r}]$ is
the determinant of the change of basis matrix between the ordered
basis for $b'^{r-1}\medspace\tilde{b'^{r}}$ as described, and $\mathcal{X}_{2}^{r}$,
the previously described basis for $X_{2}^{r}$: \begin{equation}
\left(\begin{tabular}{cc}
 $b^{r}\otimes1$  &  Id \\
Id  &  0 \end{tabular}\right)\end{equation}
 which has determinant $\pm1$.

The map \begin{eqnarray*}
\isom: & X_{1} & \longrightarrow\;\bcn(D)\{1\}\\
 & y\otimes 1+z\otimes\x & \longmapsto\; z\end{eqnarray*}
 induces an isomorphism between $\hm(D')$ and $\hm(D)$. (see \cite{Khovanov1999})
With the described bases, the matrix representation of the cochain
map is the identity matrix for all cochain groups. Hence by \eqref{torsion-id-quasi}
the Reidemeister torsion of the quasi-isomorphism is $1$.

\subsection{Reidemeister II Move}

\begin{figure}[h]

\begin{centering}\includegraphics[scale=0.6]{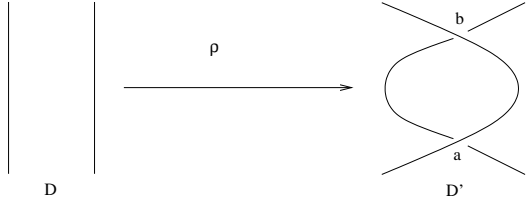} \par\end{centering}

\caption{Reidemeister II invariance}
\end{figure}

As before, the set $\indI'$ of crossings of $D'$ is $\indI$, the
set of crossings of $D$, followed by $a$, then $b$ as an ordered
$|\indI'|$-tuple.

\begin{figure}[h]

\begin{centering}\includegraphics[scale=0.6]{reidII-complex.eps} \par\end{centering}

\caption{Complex for the right side $\bcn(D')$}
\end{figure}

Let: \begin{eqnarray*}
X_{1} & = & \{ z+\alpha(z)|z\in\bcn(D'(*01))[-1]\{-1\}\}\\
X_{2} & = & \{ z+\partial'y|z,y\in\bcn(D'(*00))\}\\
X_{3} & = & \{ z+y\otimes 1|z,y\in\bcn(D'(*11))[-2]\{-2\}\}\end{eqnarray*}
 where $\alpha(z)=-\partial'_{01\rightarrow11}(z)\otimes 1\in\bcn(D'(*10))[-1]\{-1\}$.

Then, $\bcn(D')$ is the direct sum of its subcomplexes $X_{1}$,
$X_{2}$, and $X_{3}$. (see \cite{Khovanov1999})

Now consider the $r$-th cochain group of the complex for the left
side , $\bcn(D')^{r}$. This space has a distinguished basis coming
from the construction of the complex. Let's see how the decomposition
of the complex into these subcomplexes affects the torsion. Let's
consider first the case when $\partial'_{01\rightarrow11}=m$. We
can handle this locally since everywhere else the complex is unchanged.

\begin{figure}[h]

\begin{centering}\includegraphics[scale=0.6]{reidII-complex-m.eps} \par\end{centering}

\caption{$\bcn(D')$ for the case when $\partial'_{01\rightarrow11}=m$}
\end{figure}

Since $X_{1}^{r}=\{ z+\alpha(z)|z\in\bcn(D'(*01))[-1]\{-1\}^{r}\}$,
we can choose the basis: \begin{eqnarray*}
(1\otimes1 & , & 1\otimes1)\\
(x\otimes1 & , & x\otimes1)\\
(x\otimes1 & , & 1\otimes x)\\
(0 & , & x\otimes x)\end{eqnarray*}
where the first component is an element of $\bcn(D'(*10))^{r}$ and
the second component lies on $\bcn(D'(*01))^{r}$.

Since $X_{2}^{r}=\{ z+\partial'y|z\in\bcn(D'(*00))^{r},y\in\bcn(D'(*00))^{r-1}\}$,
a basis for the $z$ part (which lies in $\bcn(D'(*00))^{r}$), is
just $\{1,x\}$, hence, unchanged, and for the $\partial'(y)$ part
the basis is \[
\{(1\otimes x+x\otimes1,1\otimes x+x\otimes1),(x\otimes x,x\otimes x)\}\]
 which lies on $\bcn(D'(*10))^{r}\oplus\bcn(D'(*01))^{r}$.

Now, $X_{3}^{r}=\{ z+y\otimes 1|z\in\bcn(D'(*11))[-2]\{-2\}^{r},y\in\bcn(D'(*11))[-2]\{-2\}^{r+1}\}$
has as basis $\{1,x\}$ on $\bcn(D'(*11))^{r}$ and \[
\{(1\otimes1,0),(x\otimes1,0)\}\]
 on $\bcn(D'(*10))^{r}\oplus\bcn(D'(*01))^{r}$.

Thus, the bases for $\bcn(D'(*00))^{r}$ and $\bcn(D'(*11))^{r}$
are the distinguished bases. And for $\bcn(D'(*10))^{r}\oplus\bcn(D'(*01))^{r}$
the change of basis matrix between the the ordered basis coming from
$X_{1}^{r}$, $X_{2}^{r}$, and $X_{3}^{r}$ (in the order presented)
and the distinguished basis for $\bcn(D')^{r}$ is:

\begin{equation}
\left(\begin{tabular}{cccccccc}
 1  &  0  &  0  &  0  &  0  &  0  &  1  &  0 \\
0  &  1  &  1  &  0  &  1  &  0  &  0  &  1 \\
0  &  0  &  0  &  0  &  1  &  0  &  0  &  0 \\
0  &  0  &  0  &  0  &  0  &  1  &  0  &  0 \\
1  &  0  &  0  &  0  &  0  &  0  &  0  &  0 \\
0  &  1  &  0  &  0  &  1  &  0  &  0  &  0 \\
0  &  0  &  1  &  0  &  1  &  0  &  0  &  0 \\
0  &  0  &  0  &  1  &  0  &  1  &  0  &  0 \end{tabular}\right)\end{equation}
which has determinant $\pm1$.

Let's consider now the case when $\partial'_{01\rightarrow11}=\Delta$.
We can handle this locally since everywhere else the complex is unchanged.

\begin{figure}[h]

\begin{centering}\includegraphics[scale=0.6]{reidII-complex-c.eps} \par\end{centering}

\caption{$\bcn(D')$ for the case when $\partial'_{00\rightarrow11}=-\Delta$}
\end{figure}

Since $X_{1}^{r}=\{ z+\alpha(z)|z\in\bcn(D'(*01))[-1]\{-1\}^{r}\}$,
we can choose the basis: \[
\{(1\otimes x\otimes1+x\otimes1\otimes1,1),(x\otimes x\otimes1,x)\}\]
where the first component is an element of $\bcn(D'(*10))^{r}$ and
the second component lies on $\bcn(D'(*01))^{r}$.

Since $X_{2}^{r}=\{ z+\partial'y|z\in\bcn(D'(*00))^{r},y\in\bcn(D'(*00))^{r-1}\}$,
a basis for the $z$ part (which lies in $\bcn(D'(*00))^{r}$) is
$\{1\otimes1,x\otimes1,1\otimes x,x\otimes x\}$, and for the $\partial'(y)$
part is

\begin{eqnarray*}
(1\otimes1\otimes x+1\otimes x\otimes1 & , & 1)\\
(x\otimes1\otimes x+x\otimes x\otimes1 & , & x)\\
(1\otimes x\otimes x & , & x)\\
(x\otimes x\otimes x & , & 0)\end{eqnarray*}
which lies on $\bcn(D'(*10))^{r}\oplus\bcn(D'(*01))^{r}$.

Now, $X_{3}^{r}=\{ z+y\otimes 1|z\in\bcn(D'(*11))[-2]\{-2\}\}^{r},y\in\bcn(D'(*11))[-2]\{-2\}\}^{r+1}$
has as basis $\{1\otimes1,x\otimes1,1\otimes x,x\otimes x\}$ on $\bcn(D'(*11))[-2]\{-2\}^{r}$
and

\[
\{(1\otimes1\otimes1,0),(x\otimes1\otimes1,0),(1\otimes x\otimes1,0),(x\otimes x\otimes1,0)\}\]
on $\bcn(D'(*10))^{r}\oplus\bcn(D'(*01))^{r}$.

Thus, the bases for $\bcn(D'(*00))^{r}$ and $\bcn(D'(*11))^{r}$
are the distinguished bases. And for $\bcn(D'(*10))^{r}\oplus\bcn(D'(*01))^{r}$,
the change of basis matrix between the ordered basis coming from $X_{1}^{r}$,
$X_{2}^{r}$, and $X_{3}^{r}$ (in the order presented) and the distinguished
basis for $\bcn(D')^{r}$ is:

\begin{equation}
\left(\begin{tabular}{cccccccccc}
 0  &  0  &  0  &  0  &  0  &  0  &  1  &  0  &  0  &  0 \\
1  &  0  &  0  &  0  &  0  &  0  &  0  &  1  &  0  &  0 \\
1  &  0  &  1  &  0  &  0  &  0  &  0  &  0  &  1  &  0 \\
0  &  1  &  0  &  1  &  0  &  0  &  0  &  0  &  0  &  1 \\
0  &  0  &  1  &  0  &  0  &  0  &  0  &  0  &  0  &  0 \\
0  &  0  &  0  &  1  &  0  &  0  &  0  &  0  &  0  &  0 \\
0  &  0  &  0  &  0  &  1  &  0  &  0  &  0  &  0  &  0 \\
0  &  0  &  0  &  0  &  0  &  1  &  0  &  0  &  0  &  0 \\
1  &  0  &  1  &  0  &  0  &  0  &  0  &  0  &  0  &  0 \\
0  &  1  &  0  &  1  &  1  &  0  &  0  &  0  &  0  &  0 \end{tabular}\right)\end{equation}
which also has determinant $\pm1$.

Therefore, the bases for the cochain groups coming from the decomposition
of the complex as the subcomplexes $X_{1}$, $X_{2}$, and $X_{3}$
are equivalent to the distinguished bases in both cases.

The subcomplex $X_{2}=\bcn(D'(*00))\bigoplus\partial'(\bcn(D'(*00)))$
is acyclic. (see \cite{Khovanov1999}) Let $a\in X_{2}^{r}$, then
$a=z+\partial'^{r-1}(y)$ where $z\in\bcn(D'(*00))^{r}$ and $y\in\bcn(D'(*00))^{r-1}$,
hence \begin{eqnarray*}
\partial'^{r}(a) & = & \partial'^{r}(z)+\partial'^{r}(\partial'^{r-1}(y))\\
 & = & \Delta(z)+\partial'_{00\rightarrow01}(z)+\partial'_{00\rightarrow00}(z)\end{eqnarray*}

Let $b'^{r}$ be a basis for $\Img\partial'^{r}\vert_{X_{2}^{r}}$.
Note that since $\Delta$ has trivial kernel, a pullback for this
basis $\tilde{b}'^{r}$ is the distinguished basis $c'(00)^{r}$ for
$\bcn(D'(*00))^{r}$.

Hence, $[b'^{r-1}\medspace\tilde{b}'^{r}/\mathcal{X}_{2}^{r}]$ is
the determinant of the change of basis matrix between the ordered
basis for $b'^{r-1}\medspace\tilde{b}'^{r}$ as described and $\mathcal{X}_{2}^{r}$,
the previously described basis for $X_{2}^{r}$. These bases are the
same since $b'^{r-1}$ is the basis we had for $\partial'^{r-1}\bcn(D'(*00))^{r-1}$
and $\tilde{b}'^{r}$ is the basis we had for $\bcn(D'(*00))^{r}$.
Hence, $[b'^{r-1}\medspace\tilde{b}'^{r}/\mathcal{X}_{2}^{r}]=1$.

Consider the acyclic (see \cite{Khovanov1999}) subcomplex $X_{3}=\{ z+y\otimes 1|z,y\in\bcn(D'(*11))\}$.
Let $a\in X_{3}^{r}$, then $a=z+y\otimes1$ where $z\in\bcn(D'(*11))^{r},y\in\bcn(D'(*11))^{r+1}$and,
hence, \begin{eqnarray*}
\partial'{}^{r}(a) & = & \partial'{}^{r}(z)+\partial'{}^{r}(y\otimes1)\\
 & = & \partial'{}_{11\rightarrow11}^{r}(z)-m(y,1)+\partial'{}_{01\rightarrow01}^{r}(y\otimes1)\\
 & = & \partial'{}_{11\rightarrow11}^{r}(z)-y+\partial'{}_{01\rightarrow01}^{r}(y\otimes1)\end{eqnarray*}

Let $b'^{r}$ be a basis for $\Img\partial'^{r}\vert_{X_{3}^{r}}$.
Note that since $m(*,1)$ generates all of $\bcn(D'(*11))^{r+1}$,
\[
\Img\partial'{}^{r}|_{X_{3}^{r}}={\{(-y,\partial'{}_{01\rightarrow01}^{r}(y\otimes1))|y\in bcn(D'(*11))^{r+1}\}}\]
 hence, a basis is given by: \[
b'^{r}=(-c'(*11)^{r+1},{b'}_{01}^{r})\]
 where $c'(*11)$ is the distinguished basis for $\bcn(D'(*11))^{r}$
and ${b'}_{01}^{r}$ is a basis for $\Img\partial'{}_{01\rightarrow01}^{r}$.
A pullback for this basis is: \[
\tilde{b}'^{r}=(0,c'(*11)^{r+1}\otimes1)\]
Hence, $[b'^{r-1}\medspace\tilde{b}'^{r}/\mathcal{X}_{3}^{r}]$ is
the determinant of the change of basis matrix between the ordered
basis for $b'^{r-1}\medspace\tilde{b}'^{r}$ as described, and the
basis $\mathcal{X}_{3}^{r}$, described previously for $X_{3}^{r}$:\begin{equation*}
\left(
\begin{tabular}{cc}
$-Id$ & $0$ \\
$b'^{r-1}_{01}$ & $Id$
\end{tabular}
\right)
\end{equation*}which has determinant $\pm1$.

Moreover: \begin{eqnarray*}
\isom: & \bcn(D)[-1]\{-1\} & \longrightarrow\; X_{1}\\
 & z & \longmapsto\;(-1)^{r}(z+\alpha(z))\end{eqnarray*}
 induces an isomorphism between $\hm(D)$ and $\hm(D')$. (see \cite{Khovanov1999})
With the prescribed bases, the matrix representation of the cochain
map is the identity matrix. Hence by \eqref{torsion-id-quasi}, the
Reidemeister torsion of the quasi-isomporhism is $1$.

\subsection{Reidemeister III Move}

\begin{figure}[h]

\begin{centering}\includegraphics[scale=0.6]{reidIII-chain-map.eps} \par\end{centering}

\caption{Reidemeister III invariance}
\end{figure}

\begin{figure}[h]
\begin{centering}\includegraphics[scale=0.3]{reidIII-complex-left.eps} \par\end{centering}

\caption{Complex for the left side $\bcn(D)$\label{RIII-left}}
\end{figure}

\begin{figure}[h]

\begin{centering}\includegraphics[scale=0.3]{reidIII-complex-right.eps} \par\end{centering}

\caption{Complex for the right side $\bcn(D')$\label{RIII-right}}
\end{figure}

Again, $a,b,c$ and $a',b',c'$ are the last three elements in $\indI$
and $\indI'$ and the others are in the same order.

Define $\alpha,\beta,\alpha',\beta'$ as \begin{eqnarray*}
\alpha: & \bcn(D(*110))[-2]\{-2\} & \longrightarrow\;\bcn(D(*010))[-1]\{-1\}\approx\bcn(D(*110))[-1]\{-1\}\otimes\A\\
 & z & \longmapsto\; z\otimes 1\\
\beta: & \bcn(D(*100))[-1]\{-1\} & \longrightarrow\;\bcn(D(*010))[-1]\{-1\}\\
 & z & \longmapsto\;\alpha\partial_{100\rightarrow110}(z)\\
\alpha': & \bcn(D'(*110))[-2]\{-2\} & \longrightarrow\;\bcn(D'(*100))[-1]\{-1\}\approx\bcn(D'(*110))[-1]\{-1\}\otimes\A\\
 & z & \longmapsto\; z\otimes 1\\
\beta': & \bcn(D'(*010))[-1]\{-1\} & \longrightarrow\;\bcn(D'(*100))[-1]\{-1\}\\
 & z & \longmapsto\;-\alpha'\partial'_{010\rightarrow110}(z)\textrm{ .}\end{eqnarray*}
$\bcn(D)$ and $\bcn(D')$ can be decomposed into their subcomplexes
as below: \begin{eqnarray*}
\bcn(D) & = & X_{1}\oplus X_{2}\oplus X_{3}\\
X_{1} & = & \{ x+\beta(x)+y|x\in\bcn(D(*100))[-1]\{-1\},y\in\bcn(D(*1))[-1]\{-1\}\}\\
X_{2} & = & \{ x+\partial y|x,y\in\bcn(D(*000))\}\\
X_{3} & = & \{\alpha(x)+\partial\alpha(y)|x,y\in\bcn(D(*110))[-2]\{-2\}\}\\
\bcn(D') & = & Y_{1}\oplus Y_{2}\oplus Y_{3}\\
Y_{1} & = & \{ x+\beta'(x)+y|x\in\bcn(D'(*010))[-1]\{-1\},y\in\bcn(D'(*1))[-1]\{-1\}\}\\
Y_{2} & = & \{ x+\partial'y|x,y\in\bcn(D'(*000))\}\\
Y_{3} & = & \{\alpha'(x)+\partial'\alpha'(y)|x,y\in\bcn(D'(*110))[-2]\{-2\}\}\end{eqnarray*}

On the cube on Figure (\ref{RIII-left}), the complex is decomposed
as the three subcomplexes $X_{1}$, $X_{2}$ and $X_{3}$. In the
bottom of the cube, the decomposition does not change the bases for
the cochain groups. Let's look at the top of the cube and see how
this decomposition affects torsion. Note that the upper left cup is
unchanged through the diagram. Hence, we can reduce the diagram since
we are going to study locally the change of basis. Moreover, consider
first the case when $\partial_{100\rightarrow110}=-m$.

\begin{figure}[h]

\begin{centering}\includegraphics[scale=0.6]{reidIII-top-left-m.eps} \par\end{centering}

\caption{$\bcn(D)$ for the case when $\partial_{100\rightarrow110}=-m$}
\end{figure}

Since $X_{1}^{r}=\{ x+\beta(x):x\in\bcn(D(*100))[-1]\{-1\}^{r}\}$
then we get the basis: \begin{eqnarray*}
(1\otimes1 & , & -1\otimes1),\\
(x\otimes1 & , & -x\otimes1),\\
(1\otimes x & , & -x\otimes1),\\
(x\otimes x & , & 0)\end{eqnarray*}
 which lies in $\bcn(D(*100)^{r}\oplus\bcn(D(*010))^{r}$.

From $X_{2}^{r}=\{ x+\partial y|x\in\bcn(D(*000))^{r},y\in\bcn(D(*000))^{r-1}\}$
we get the basis $\{1,x\}$ for $\bcn(D(*000))^{r}$ and for the part
that lies in $\bcn(D(*100)^{r}\oplus\bcn(D(*010))^{r}$, we get: \[
\{(1\otimes x+x\otimes1,1\otimes x+x\otimes1),(x\otimes x,x\otimes x)\}\]
 Finally, since $X_{3}^{r}=\{\alpha(x)+\partial\alpha(y)|x\in\bcn(D(*110))[-2]\{-2\}^{r+1},y\in\bcn(D(*110))[-2]\{-2\}^{r}\}$,
we get the basis $\{1,x\}$ for $\bcn(D(*110))^{r}$ and for the part
that lies in $\bcn(D(*100)^{r}\oplus\bcn(D(*010))^{r}$, we get: \[
\{(0,1\otimes1),(0,x\otimes1)\}\]
 Therefore, the change of basis matrix for $\bcn(D(*100)^{i}\oplus\bcn(D(*010))^{i}$
between the bases described for $X_{1}^{r}$,$X_{2}^{r}$, and $X_{3}^{r}$
and the distinguished basis is: \begin{equation}
\left(\begin{tabular}{cccccccc}
 1  &  0  &  0  &  0  &  0  &  0  &  0  &  0 \\
0  &  1  &  0  &  0  &  1  &  0  &  0  &  0 \\
0  &  0  &  1  &  0  &  1  &  0  &  0  &  0 \\
0  &  0  &  0  &  1  &  0  &  1  &  0  &  0 \\
-1  &  0  &  0  &  0  &  0  &  0  &  1  &  0 \\
0  &  -1  &  -1  &  0  &  1  &  0  &  0  &  1 \\
0  &  0  &  0  &  0  &  1  &  0  &  0  &  0 \\
0  &  0  &  0  &  0  &  0  &  1  &  0  &  0 \end{tabular}\right)\end{equation}
 which has determinant $\pm1$.

For the case when $\partial_{100\rightarrow110}=-\Delta$, the complex
is shown. %
\begin{figure}[h]

\begin{centering}\includegraphics[scale=0.6]{reidIII-top-left-c.eps} \par\end{centering}

\caption{$\bcn(D)$ for the case when $\partial_{100\rightarrow110}=-\Delta$}
\end{figure}

Then $X_{1}^{r}$ has basis: \[
\{(1,-1\otimes x\otimes1-x\otimes1\otimes1),(x,-x\otimes x\otimes1)\}\]
 which lies in $\bcn(D(*100)^{r}\oplus\bcn(D(*010))^{r}$.

As before $X_{2}^{r}$, has the distinguished basis for $\bcn(D(*000))^{r}$
and its contribution to $\bcn(D(*100)^{r}\oplus\bcn(D(*010))^{r}$
is:

\begin{eqnarray*}
(1 & , & 1\otimes1\otimes x+x\otimes1\otimes1)\\
(x & , & 1\otimes x\otimes x+x\otimes x\otimes1)\\
(x & , & x\otimes1\otimes x)\\
(0 & , & x\otimes x\otimes x)\end{eqnarray*}
Also as before, $X_{3}^{r}$ yields the distinguished basis for $\bcn(D(*110))^{r}$
and for $\bcn(D(*100)^{r}\oplus\bcn(D(*010))^{r}$ it contributes:
\begin{eqnarray*}
(0 & , & 1\otimes1\otimes1)\\
(0 & , & x\otimes1\otimes1)\\
(0 & , & 1\otimes x\otimes1)\\
(0 & , & x\otimes x\otimes1)\end{eqnarray*}
Therefore, the change of basis matrix for $\bcn(D(*100)^{r}\oplus\bcn(D(*010))^{r}$
between the bases described for $X_{1}^{r}$,$X_{2}^{r}$, and $X_{3}^{r}$
and the distinguished basis is: \begin{equation}
\left(\begin{tabular}{cccccccccc}
 1  &  0  &  1  &  0  &  0  &  0  &  0  &  0  &  0  &  0 \\
0  &  1  &  0  &  1  &  1  &  0  &  0  &  0  &  0  &  0 \\
0  &  0  &  0  &  0  &  0  &  0  &  1  &  0  &  0  &  0 \\
-1  &  0  &  1  &  0  &  0  &  0  &  0  &  1  &  0  &  0 \\
-1  &  0  &  0  &  0  &  0  &  0  &  0  &  0  &  1  &  0 \\
0  &  -1  &  0  &  1  &  0  &  0  &  0  &  0  &  0  &  1 \\
0  &  0  &  1  &  0  &  0  &  0  &  0  &  0  &  0  &  0 \\
0  &  0  &  0  &  0  &  1  &  0  &  0  &  0  &  0  &  0 \\
0  &  0  &  0  &  1  &  0  &  0  &  0  &  0  &  0  &  0 \\
0  &  0  &  0  &  0  &  0  &  1  &  0  &  0  &  0  &  0 \end{tabular}\right)\end{equation}
 which has determinant $\pm1$.

In both cases, the bases for the cochain groups $\bcn(D(*000))^{r}$
and $\bcn(D(*110))^{r}$ are the distinguished bases. For the cochain
groups $\bcn(D(*100)^{r}$ and $\bcn(D(*010))^{r}$, the change of
bases matrices have determinant $\pm1$. Hence, the bases for the
cochain groups coming from the decomposition of the complex as the
subcomplexes $X_{1}$, $X_{2}$, and $X_{3}$ are equivalent to the
distinguished bases.

On the right side, Figure (\ref{RIII-right}), the cube is also decomposed
as three subcomplexes $X_{1}$, $X_{2}$, and $X_{3}$. Note that
the decomposition does not change the basis used in the bottom of
the cube. Let's look at the top of the cube after reducing the bottom
right cup, which is constant throughout.

\begin{figure}[h]

\begin{centering}\includegraphics[scale=0.6]{reidIII-complex-top-right-reduced.eps} \par\end{centering}

\caption{Reduced diagram for $\bcn(D')$}
\end{figure}

After rotating each diagram by $180^{\circ}$, and interchanging the
placement of the middle cochain groups in the direct sum, we obtain
the same complex as the one on the top of the left cube. The reader
should look at the way the complex is decomposed and note the change
of basis done for the the cube on Figure (\ref{RIII-left}) is similar
to the one done here.

Note that the acyclic (see \cite{Khovanov1999}) subcomplexes: \begin{eqnarray*}
X_{2} & = & \bcn(D(*000))\bigoplus\partial(\bcn(D(*000)))\\
X'_{2} & = & \bcn(D'(*000))\bigoplus\partial'(\bcn(D'(*000)))\end{eqnarray*}
 are of the form $D\bigoplus\partial(D)$ and (since $\Delta$ has
trivial kernel) they behave as the case for $X_{2}$ on the Reidemeister
II move.

Furthermore, the acyclic (see \cite{Khovanov1999}) subcomplexes:
\begin{eqnarray*}
X_{3} & = & \alpha(\bcn(D(*110)))\bigoplus\partial(\alpha(\bcn(D(*110))))\\
X'_{3} & = & \alpha'(\bcn(D'(*110)))\bigoplus\partial'(\alpha'(\bcn(D'(*110))))\end{eqnarray*}
 behave exactly like the case for $X_{3}$ in the Reidemeister II
move. (since $m(*,1)$ is an onto map)

Furthermore: $\bcn(D(*100))[-1]\{-1\}$ and $\bcn(D'(*010))[-1]\{-1\}$,
$\bcn(D(*1))[-1]\{-1\}$ and $\bcn(D'(*1))[-1]\{-1\}$ are naturally
isomorphic, and $X_{1}$ is isomorphic to $Y_{1}$ via \[
\isom:x+\beta(x)+y\longmapsto x+\beta'(x)+y\textrm{ .}\]
 and with the bases described the matrix representation of the quasi-isomorphism
is the identity map for all the cochain groups and thus by \eqref{torsion-id-quasi}
its Reidemeister torsion is $1$.

Therefore, for all the Reidemeister moves, the quasi-isomorphisms
have Reidemeister torsion $1$. If two diagrams represent the same
knot, then one can go from one to the other with a series of Reidemeister
moves. The volume form is preserved by each move, giving an invariant
volume form on the Khovanov invariant. The Khovanov chain complex
decomposes into subcomplexes by polynomial degree. Since the acyclic
subcomplexes used to demonstrate invariance have Reidemeister torsion
$1$, for acyclic subcomplexes corresponding to these polynomial degrees,
Reidemeister torsion will be a number (there is no homology present),
which will also be invariant for knots and links.

\subsection{Basis for homology groups}

Let's look at a special case of our construction by making the following
choices for bases for the cohomology groups. Consider the cochain
groups to have the same generators as explained before but being over
$\mathbb{Z}$ instead of a field, then the cohomology groups (by the
Fundamental Theorem of Abelian Groups) are of the form: \[
H(\mathbb{Z})\cong\mathbb{Z}^{n}\oplus\mathbb{Z}_{p_{i}}^{n_{i}}\]
 where the $p_{i}$'s are prime numbers. Then, the quotient of $H(\mathbb{Z})$
by its torsion subgroup is free and it has the same rank (over $\mathbb{Z}$)
as $H(\mathbb{F})$ (over $\mathbb{F}$). Over $\mathbb{Z}$ we can
pick a basis for the free part of the homology groups and any two
bases are related by a transition matrix in $Gl(n,\mathbb{Z})$. Since
Khovanov homology is invariant over $\mathbb{Z}$, using this basis
for the homology groups we obtain a numerical invariant for knots
and links.

Computationally we can obtain such a basis by using the Smith normal
form. Consider the short exact sequence: \begin{equation}
0\rightarrow B^{k-1}\xrightarrow{X^{k}}Z^{k}\xrightarrow{\pi}H^{k}\rightarrow0\end{equation}
 where $X_{k}$ is the matrix representation of the injection map
with respect to a basis for $B^{k-1}$ and a $\mathbb{Z}$-basis for
$Z^{k}=\ker\partial^{k}$. Now consider the Smith normal form of $X^{k}$:
\begin{equation}
D=snf(X^{k})=\left(\begin{matrix}0 & 0 & \dotsb & 0\\
\vdots & \vdots & \ddots & \vdots\\
0 & 0 & \dotsb & 0\\
d_{1} & 0 & \dotsc & 0\\
0 & d_{2} & \dotsc & 0\\
\vdots & \dotsb & \ddots & \vdots\\
0 & 0 & \dotsc & d_{l}\end{matrix}\right)\end{equation}
 where $d_{i}=1$ or $d_{i}$ is an invariant factor of the matrix
$X_{k}$. In particular, there are unitary matrices $V$ and $U$
such that $U\cdot X^{k}\cdot V=D$. In other words, $U$ and $V$
are change of basis matrices for $B^{k-1}$ and $Z^{k}$, respectively,
that make the injection map have matrix representation $D$. Zero
rows of $D$ will give a basis for the cohomology group after applying
the change of basis matrix $U$.

Since we are only interested in a basis for the cohomology groups,
we can restrict our study to the case where $U\cdot X^{k}=D$. 

\begin{theorem}
Let $U$ and $U^{'}$ be unitary matrices such that $U\cdot X^{k}=D=U^{'}\cdot X^{k}$
where $D$ and $X^{k}$ are as specified above. Then the basis for
the cohomology groups obtained from the zero rows of $D$ after applying
the change of bases matrices $U$ and $U^{'}$ respectively are equivalent.
\end{theorem}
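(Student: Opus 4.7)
The plan is to reduce the statement to a concrete piece of linear algebra about matrices in $GL(n,\mathbb{Z})$. Set $M:=U'\cdot U^{-1}\in GL(n,\mathbb{Z})$ and combine the two hypotheses $U\cdot X^{k}=D$ and $U'\cdot X^{k}=D$ to get $MD=D$, i.e., $M$ fixes every column of $D$.

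First I would extract the block structure of $M$ from $MD=D$. Each nonzero column of $D$ has the form $d_{j}e_{n-l+j}$, so $d_{j}(Me_{n-l+j})=d_{j}e_{n-l+j}$; since $\mathbb{Z}$ is torsion-free and $d_{j}\neq 0$, we may cancel and conclude $Me_{n-l+j}=e_{n-l+j}$ for $j=1,\dotsc,l$. Hence the last $l$ columns of $M$ are the standard basis vectors, so
\begin{equation*}
M=\left(\begin{matrix}A & 0\\ B & I_{l}\end{matrix}\right),\qquad M^{-1}=\left(\begin{matrix}A^{-1} & 0\\ -BA^{-1} & I_{l}\end{matrix}\right),
\end{equation*}
for some $A\in\mathbb{Z}^{(n-l)\times(n-l)}$ and $B\in\mathbb{Z}^{l\times(n-l)}$. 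Unimodularity of $M$ forces $\det A=\det M=\pm 1$, so $A\in GL(n-l,\mathbb{Z})$.

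Next I would translate this back into a statement about bases of $H^{k}$. Let $\{z'_{i}\}_{i=1}^{n}$ and $\{z''_{i}\}_{i=1}^{n}$ denote the bases of $Z^{k}$ produced by $U$ and $U'$, so that the two cohomology bases under comparison are $\{[z'_{i}]:1\leq i\leq n-l\}$ and $\{[z''_{i}]:1\leq i\leq n-l\}$ (the indices are those of the zero rows of $D$). The relation $U'=MU$ gives $z''_{i}=\sum_{k}(M^{-1})_{ki}\,z'_{k}$, so for $i\leq n-l$ we have
\begin{equation*}
[z''_{i}]=\sum_{k=1}^{n-l}(A^{-1})_{ki}\,[z'_{k}]+\sum_{j'=1}^{l}(-BA^{-1})_{j',i}\,[z'_{n-l+j'}]\qquad\text{in }H^{k}.
\end{equation*}
The classes $[z'_{n-l+j'}]$ lie in the torsion subgroup of $H^{k}$ (with $d_{j'}[z'_{n-l+j'}]=0$), so after passing to the free quotient the contribution of $B$ disappears and the transition between the two induced bases of the free part of $H^{k}$ is exactly $A^{-1}\in GL(n-l,\mathbb{Z})$, whose determinant is $\pm 1$. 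Hence $\lvert[b/c]\rvert=1$, which is the notion of equivalence used throughout the paper for R-torsion.

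The main obstacle I expect is purely bookkeeping: making sure the convention for ``applying the change of basis matrix $U$'' (columns of $U$ vs.\ columns of $U^{-1}$ as new basis vectors) is consistent, and tracking how the quotient $Z^{k}\twoheadrightarrow H^{k}\twoheadrightarrow H^{k}/T(H^{k})$ interacts with the relabelling by $M^{-1}$. Once this bookkeeping is pinned down, the crucial conceptual ingredient is the torsion-free cancellation step $d_{j}v=d_{j}e_{n-l+j}\Rightarrow v=e_{n-l+j}$ in $\mathbb{Z}^{n}$, which simultaneously rigidifies the right-hand block of $M$ and is what ensures the off-diagonal block $B$ is invisible on the free part of $H^{k}$, leaving the transition matrix unimodular.
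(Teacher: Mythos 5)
Your proof is correct and follows essentially the same route as the paper: combine $U\cdot X^{k}=D=U'\cdot X^{k}$ into $MD=D$ (the paper uses $U\cdot U'^{-1}$, you use the inverse $U'\cdot U^{-1}$, which is only a choice of direction), read off the block structure of $M$ from the shape of $D$, and observe that the top-left block is unimodular, so the induced bases of the free part of $H^{k}$ are equivalent. Your write-up is somewhat more explicit than the paper's in two places — the torsion-free cancellation that pins down the last $l$ columns of $M$, and the fact that the classes $[z'_{n-l+j'}]$ are torsion so the off-diagonal block is invisible on the free quotient — but these are details the paper leaves implicit, not a different argument.
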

\begin{proof}
By our hypothesis:\begin{equation}
\begin{split}U\cdot X^{k}= & D=U^{'}\cdot X^{k}\qquad\Leftrightarrow\\
U^{-1}\cdot D= & X^{k}=U^{'-1}\cdot D\quad\Leftrightarrow\\
U\cdot U^{'-1}\cdot D= & D\end{split}
\end{equation}
 From the structure of $D$, and column operations we can get that:
\begin{equation}
U\cdot U^{'-1}=\left(\begin{matrix}M & 0\\
* & Id\end{matrix}\right)\thicksim\left(\begin{matrix}M & 0\\
0 & Id\end{matrix}\right)\end{equation}
 Moreover, since $U$ and $U^{'}$ are unitary: \begin{equation}
\begin{split}1=1\ \cdot1=(\det U)\cdot(\det U^{'-1}) & =\det(U\cdot U^{'-1})\\
 & =\det(M)\cdot\det(Id)\\
 & =\det(M)\end{split}
\label{hom-base-change}\end{equation}
 Since any two $\mathbb{Z}$-basis for $Z^{k}$ have a change of basis
matrix in $Gl(n,\mathbb{Z})$ and by \eqref{hom-base-change}, any
two basis that we picked for our cohomology groups also have a change
of basis $M\in Gl(n,\mathbb{Z})$, the bases for cohomology groups
are equivalent, and hence do not change the torsion computation. 
\end{proof}

\section{Examples}

\label{sec:Examples}

In 2004 Shumakovitch wrote KhoHo, a program for computing and studying
Khovanov homology (see \cite{Shumakovitch2004}). It is run using
the PARI/GP calculator. For this reason, the routines for computing
Reidemeister torsion for the Khovanov cochain complex were also written
using PARI/GP code. The Khovanov cochain complex can be decompose
by the grading in the vector space $V$, and in this way from each
of these subcomplexes one obtains the corresponding coefficient for
the term of that particular degree in the Jones polynomial. From KhoHo,
by inputing a planar diagram representation for a given diagram, one
can easily obtain the coboundary operators for each of the subcomplexes
that form the Khovanov cochain complex.

\subsection{A detailed example}

The Hopf link contains generators in homological degrees $-2$, $-1$,
and $0$, and it is decomposed by polynomial degree into four subcomplexes
of degrees $0$, $-2$, $-4$, and $-6$. The output of the RTorsion
routine returns a matrix. 

\begin{verbatim}
 ? RTorsion(hopf)
%1 = 

[1 1 1 1 "0"]

[1 -1 1 1 "-2"]

[1 -1 1 1 "-4"]

[-1 1 1 1 "-6"]

\end{verbatim} Each row correspond to a different subcomplex. In the last column
of each row, the number inside the quotation marks indicates the polynomial
degree for that particular subcomplex. The number in the previous
column is the Reidemeister torsion for the subcomplex whose degree
is specified in the last column. The rest of the entries in the matrix
give the contribution to the torsion coming from each of the cochain
groups contained in that particular subcomplex. For example, the subcomplex
of polynomial degree $-4$ is 

$$0 \rightarrow C^{-2}_{-4} \xrightarrow{\partial^{-2}} C^{-1}_{-4} \xrightarrow{\partial^{-1}} C^0_{-4} \rightarrow 0$$
and $[h^{-2}\tilde{b}^{-2}/c^{-2}]=1$, $[b^{-2}\tilde{b}^{-1}/c^{-1}]=-1$
(there is no homology here) and $[b^{-1}h^{0}/c^{0}]=1$. Therefore
$\tau_{-4}=\biggl\lvert\frac{[b^{-2}\tilde{b}^{-1}/c^{-1}]}{[h^{-2}\tilde{b}^{-2}/c^{-2}]\cdot[b^{-1}h^{0}/c^{0}]}\biggr\rvert=\biggl\lvert\frac{-1}{1\cdot1}\biggr\rvert=1$.
Note that these are the entries for the row corresponding to this
subcomplex.

\subsection{Knots}

\label{sec:R-torsion-knots}The names used for the knots in this table
follow Rolfsen's table of knots \cite{Rolfsen2003}. The R-torsion
for each knot is given as a table that contains the contribution to
torsion from each cochain group (i.e. the determinant of the change
of basis matrix between the basis obtained from the boundary operators
and $\mathbb{Z}$-homology and the distinguished basis), and at the
next to last column it contains the R-torsion for the subcomplex (i.e.
the alternating product of the determinants of the change of basis
matrices coming from each of the cochain groups) whose degree is specified
in the last column.

\begin{longtable}{|c|r|}
\hline
\multicolumn{2}{|c|}{{\tablename} \thetable{}: R-torsion for knots with up to $7$ crossings}
\addcontentsline{lot}{table}{\thetable{} \quad R-torsion for knots with up to $7$ crossings}\\
\hline
\hline
knot&
Reidemeister Torsion\tabularnewline
\hline
\endfirsthead
\hline

\multicolumn{2}{|c|}{{\tablename} \thetable{} -- continued}\\
\hline
\hline
\endhead

knot3[1]
&[1 1 1 1 1 "-1"] \\*

&[1 1 1 1 1 "-3"] \\*

&[1 -1 1 1 1 "-5"] \\*

&[1 -2 1 1 (1 /2) "-7"] \\*

&[1 1 1 1 1 "-9"]
\tabularnewline \hline
knot4[1]
&[1 1 1 1 1 1 "5"] \\*

&[1 1 1 -1 2 (1 /2) "3"] \\*

&[1 -1 1 1 1 1 "1"] \\*

&[1 -1 -1 -1 1 1 "-1"] \\*

&[-1 -2 1 1 1 2 "-3"] \\*

&[1 1 1 1 1 1 "-5"]
\tabularnewline \hline
knot5[1]
&[1 1 1 1 1 1 1 "-3"] \\*

&[1 1 -1 -1 1 1 1 "-5"] \\*

&[1 1 -1 1 1 1 1 "-7"] \\*

&[-1 1 1 2 1 1 (1 /2) "-9"] \\*

&[1 -1 1 1 1 1 1 "-11"] \\*

&[-1 2 1 1 1 1 (1 /2) "-13"] \\*

&[1 1 1 1 1 1 1 "-15"]
\tabularnewline \hline
knot5[2]
&[1 1 1 1 1 1 1 "-1"] \\*

&[1 1 1 1 1 -1 1 "-3"] \\*

&[1 1 1 1 2 -1 2 "-5"] \\*

&[1 -1 -1 2 1 -1 (1 /2) "-7"] \\*

&[-1 1 -1 1 1 1 1 "-9"] \\*

&[1 -2 1 1 1 1 (1 /2) "-11"] \\*

&[1 1 1 1 1 1 1 "-13"]
\tabularnewline \hline
knot6[1]
&[1 1 1 1 1 1 1 1 "5"] \\*

&[1 1 1 1 1 -1 -2 (1 /2) "3"] \\*

&[1 1 1 1 1 1 -1 1 "1"] \\*

&[1 1 -1 -1 -2 1 1 (1 /2) "-1"] \\*

&[1 -1 -1 -2 1 1 -1 2 "-3"] \\*

&[-1 1 1 1 1 -1 1 1 "-5"] \\*

&[1 2 -1 -1 1 1 1 2 "-7"] \\*

&[1 1 1 1 1 1 1 1 "-9"]
\tabularnewline \hline
knot6[2]
&[1 1 1 1 1 1 1 1 "3"] \\*

&[1 1 1 1 1 1 -2 (1 /2) "1"] \\*

&[1 -1 -1 -1 -1 1 -1 1 "-1"] \\*

&[1 -1 -1 1 -2 -1 1 (1 /2) "-3"] \\*

&[-1 -1 1 -2 1 1 1 2 "-5"] \\*

&[-1 -1 -2 1 1 1 1 (1 /2) "-7"] \\*

&[-1 2 1 1 1 1 1 2 "-9"] \\*

&[1 1 1 1 1 1 1 1 "-11"]
\tabularnewline \hline
knot6[3]
&[1 1 1 1 1 1 1 1 "7"] \\*

&[1 1 1 1 1 1 -2 2 "5"] \\*

&[1 1 1 1 1 2 1 (1 /2) "3"] \\*

&[1 1 1 -1 -2 1 -1 2 "1"] \\*

&[1 1 1 2 -1 -1 1 (1 /2) "-1"] \\*

&[1 -1 -2 -1 1 1 1 2 "-3"] \\*

&[1 -2 1 1 1 1 1 (1 /2) "-5"] \\*

&[1 1 1 1 1 1 1 1 "-7"]
\tabularnewline \hline
knot7[1]
&[1 1 1 1 1 1 1 1 1 "-5"] \\*

&[1 1 -1 -1 -1 -1 1 1 1 "-7"] \\*

&[1 -1 -1 -1 1 -1 1 1 1 "-9"] \\*

&[1 -1 -1 -1 1 2 1 1 (1 /2) "-11"] \\*

&[-1 1 -1 1 1 1 1 1 1 "-13"] \\*

&[-1 -1 -1 -2 1 1 1 1 (1 /2) "-15"] \\*

&[-1 -1 1 1 1 1 1 1 1 "-17"] \\*

&[-1 -2 1 1 1 1 1 1 (1 /2) "-19"] \\*

&[1 1 1 1 1 1 1 1 1 "-21"]
\tabularnewline \hline
knot7[2]
&[1 1 1 1 1 1 1 1 1 "-1"] \\*

&[1 1 1 1 1 1 1 1 1 "-3"] \\*

&[1 1 1 1 -1 -1 2 -1 2 "-5"] \\*

&[1 1 1 1 1 -2 1 -1 (1 /2) "-7"] \\*

&[1 1 1 1 2 -1 -1 1 2 "-9"] \\*

&[1 -1 1 2 -1 -1 1 -1 (1 /2) "-11"] \\*

&[-1 -1 -1 1 -1 -1 1 1 1 "-13"] \\*

&[1 -2 -1 -1 1 1 1 1 (1 /2) "-15"] \\*

&[1 1 1 1 1 1 1 1 1 "-17"]
\tabularnewline \hline
knot7[3]
&[1 1 1 1 1 1 1 1 1 "19"] \\*

&[1 1 1 1 1 -1 1 2 2 "17"] \\*

&[1 1 1 -1 -1 1 -1 -1 1 "15"] \\*

&[1 1 -1 -1 1 4 -1 -1 4 "13"] \\*

&[1 1 -1 -1 -2 -1 -1 -1 (1 /2) "11"] \\*

&[-1 1 1 2 -1 -1 1 1 2 "9"] \\*

&[-1 1 -2 -1 1 1 1 1 (1 /2) "7"] \\*

&[1 -1 -1 -1 1 1 1 1 1 "5"] \\*

&[1 1 1 1 1 1 1 1 1 "3"]
\tabularnewline \hline
knot7[4]
&[1 1 1 1 1 1 1 1 1 "17"] \\*

&[1 1 1 1 1 -1 -1 2 2 "15"] \\*

&[1 1 -1 -1 1 1 -1 -1 1 "13"] \\*

&[1 1 -1 1 -1 4 1 1 4 "11"] \\*

&[-1 1 1 1 -2 -1 1 1 (1 /2) "9"] \\*

&[1 1 -1 2 1 1 1 1 2 "7"] \\*

&[1 -1 4 1 1 1 1 1 (1 /4) "5"] \\*

&[1 1 1 1 1 1 1 1 1 "3"] \\*

&[1 1 1 1 1 1 1 1 1 "1"]
\tabularnewline \hline
knot7[5]
&[1 1 1 1 1 1 1 1 1 "-3"] \\*

&[1 1 1 1 1 -1 1 -1 1 "-5"] \\*

&[1 1 1 1 1 -1 2 -1 2 "-7"] \\*

&[1 -1 1 1 -1 -4 1 -1 (1 /4) "-9"] \\*

&[1 -1 1 1 -2 1 1 1 2 "-11"] \\*

&[1 -1 1 4 1 1 1 1 (1 /4) "-13"] \\*

&[1 1 -2 1 1 1 1 1 2 "-15"] \\*

&[1 2 1 1 1 1 1 1 (1 /2) "-17"] \\*

&[1 1 1 1 1 1 1 1 1 "-19"]
\tabularnewline \hline
knot7[6]
&[1 1 1 1 1 1 1 1 1 "3"] \\*

&[1 1 1 1 1 1 1 -2 (1 /2) "1"] \\*

&[1 1 1 -1 -1 -1 2 1 2 "-1"] \\*

&[1 -1 -1 1 1 -2 1 -1 (1 /2) "-3"] \\*

&[1 1 1 -1 4 1 1 1 4 "-5"] \\*

&[1 -1 -1 -4 1 1 1 1 (1 /4) "-7"] \\*

&[-1 1 -2 1 1 1 1 1 2 "-9"] \\*

&[1 -2 1 1 1 1 1 1 (1 /2) "-11"] \\*

&[1 1 1 1 1 1 1 1 1 "-13"]
\tabularnewline \hline
knot7[7]
&[1 1 1 1 1 1 1 1 1 "9"] \\*

&[1 1 1 1 1 1 -1 -2 (1 /2) "7"] \\*

&[1 1 1 1 1 1 -2 -1 2 "5"] \\*

&[1 1 -1 -1 1 -4 -1 -1 (1 /4) "3"] \\*

&[-1 -1 1 -1 4 -1 1 1 4 "1"] \\*

&[-1 -1 -1 -2 -1 1 1 1 (1 /2) "-1"] \\*

&[-1 1 -4 -1 1 1 1 1 4 "-3"] \\*

&[-1 2 1 1 1 1 1 1 (1 /2) "-5"] \\*

&[1 1 1 1 1 1 1 1 1 "-7"]
\tabularnewline
\hline
\end{longtable}

\subsection{Links}

\label{sec:R-torsion-links}The names used for the links in this table
follow the Thistlethwaite link table \cite{Thistlethwaite1991}.
(See the previous section to understand the table.)

\begin{longtable}{|c|r|}
\hline
\multicolumn{2}{|c|}{{\tablename} \thetable{}: R-torsion for links with up to $7$ crossings}
\addcontentsline{lot}{table}{\thetable{} \quad R-torsion for links with up to $7$ crossings}\\
\hline
\hline
link&
Reidemeister Torsion\tabularnewline
\hline
\endfirsthead
\hline

\multicolumn{2}{|c|}{{\tablename} \thetable{} -- continued}\\
\hline
\hline
\endhead
 
link2a[1]
&[1 1 1 1 "0"] \\*

&[1 -1 1 1 "-2"] \\*

&[-1 -1 1 1 "-4"] \\*

&[1 1 1 1 "-6"]
\tabularnewline \hline 
link4a[1]
&[1 1 1 1 1 1 "0"] \\*

&[1 1 1 1 -1 1 "-2"] \\*

&[1 1 -1 2 -1 2 "-4"] \\*

&[1 -1 1 -1 -1 1 "-6"] \\*

&[-1 -1 -1 -1 1 1 "-8"] \\*

&[1 1 1 1 1 1 "-10"]
\tabularnewline \hline 
link5a[1]
&[1 1 1 1 1 1 1 "4"] \\*

&[1 1 1 1 1 -2 (1 /2) "2"] \\*

&[1 1 1 1 1 1 1 "0"] \\*

&[1 -1 1 -1 1 1 1 "-2"] \\*

&[1 -1 -2 1 1 1 2 "-4"] \\*

&[-1 -2 1 1 1 1 (1 /2) "-6"] \\*

&[1 1 1 1 1 1 1 "-8"]
\tabularnewline \hline 
link6a[1]
&[1 1 1 1 1 1 1 1 "4"] \\*

&[1 1 1 1 1 1 2 (1 /2) "2"] \\*

&[1 1 1 -1 1 -2 -1 2 "0"] \\*

&[1 1 -1 -1 1 1 -1 1 "-2"] \\*

&[-1 -1 -1 -4 -1 -1 1 4 "-4"] \\*

&[-1 1 -2 1 1 1 1 (1 /2) "-6"] \\*

&[1 1 1 1 1 1 1 1 "-8"] \\*

&[1 1 1 1 1 1 1 1 "-10"]
\tabularnewline \hline 
link6a[2]
&[1 1 1 1 1 1 1 1 "-2"] \\*

&[1 1 1 1 1 1 -1 1 "-4"] \\*

&[1 1 1 -1 -1 2 -1 2 "-6"] \\*

&[1 1 1 1 -2 -1 -1 (1 /2) "-8"] \\*

&[-1 1 -1 -2 -1 -1 1 2 "-10"] \\*

&[1 1 -2 1 1 1 1 (1 /2) "-12"] \\*

&[-1 -1 1 1 1 1 1 1 "-14"] \\*

&[1 1 1 1 1 1 1 1 "-16"]
\tabularnewline \hline 
link6a[3]
&[1 1 1 1 1 1 1 1 "-4"] \\*

&[1 -1 -1 1 -1 -1 1 1 "-6"] \\*

&[1 -1 -1 1 -1 -1 1 1 "-8"] \\*

&[-1 -1 1 1 2 1 1 (1 /2) "-10"] \\*

&[1 -1 1 -1 1 1 1 1 "-12"] \\*

&[-1 -1 -2 1 1 1 1 (1 /2) "-14"] \\*

&[-1 1 1 1 1 1 1 1 "-16"] \\*

&[1 1 1 1 1 1 1 1 "-18"]
\tabularnewline \hline 
link6a[4]
&[1 1 1 1 1 1 1 1 "7"] \\*

&[1 1 1 1 1 1 -2 2 "5"] \\*

&[1 1 1 1 1 4 -1 (1 /4) "3"] \\*

&[1 -1 -1 -1 1 1 -1 1 "1"] \\*

&[1 1 -1 1 -1 -1 1 1 "-1"] \\*

&[1 -1 -4 -1 1 1 1 4 "-3"] \\*

&[1 -2 1 1 1 1 1 (1 /2) "-5"] \\*

&[1 1 1 1 1 1 1 1 "-7"]
\tabularnewline \hline 
link6a[5]
&[1 1 1 1 1 1 1 1 "-1"] \\*

&[1 1 1 1 1 1 1 1 "-3"] \\*

&[1 1 1 1 -1 -4 -1 4 "-5"] \\*

&[1 1 1 1 2 -1 1 (1 /2) "-7"] \\*

&[1 -1 -1 -1 -1 -1 -1 1 "-9"] \\*

&[1 -1 -1 1 -1 -1 1 1 "-11"] \\*

&[1 2 1 1 1 1 1 2 "-13"] \\*

&[1 1 1 1 1 1 1 1 "-15"]
\tabularnewline \hline 
link6n[1]
&[1 1 1 1 1 1 1 1 "9"] \\*

&[1 1 1 1 -1 1 1 1 "7"] \\*

&[1 1 -1 -1 -1 -1 -1 1 "5"] \\*

&[1 1 1 -1 -2 -1 1 (1 /2) "3"] \\*

&[1 -1 -1 1 1 1 1 1 "1"] \\*

&[1 -1 1 1 1 1 1 1 "-1"]
\tabularnewline \hline 
link7a[1]
&[1 1 1 1 1 1 1 1 1 "10"] \\*

&[1 1 1 1 1 1 1 2 (1 /2) "8"] \\*

&[1 1 1 1 1 -1 4 -1 4 "6"] \\*

&[1 1 1 1 1 -4 -1 1 (1 /4) "4"] \\*

&[1 1 1 1 4 -1 1 -1 4 "2"] \\*

&[-1 -1 -1 -2 1 -1 1 1 (1 /2) "0"] \\*

&[-1 1 -4 -1 1 1 1 1 4 "-2"] \\*

&[-1 2 1 1 1 1 1 1 (1 /2) "-4"] \\*

&[1 1 1 1 1 1 1 1 1 "-6"]
\tabularnewline \hline 
link7a[2]
&[1 1 1 1 1 1 1 1 1 "-2"] \\*

&[1 1 1 1 1 1 1 1 1 "-4"] \\*

&[1 1 1 1 1 -1 4 1 4 "-6"] \\*

&[1 1 1 1 -1 4 -1 1 (1 /4) "-8"] \\*

&[1 -1 1 1 -2 -1 -1 -1 2 "-10"] \\*

&[-1 -1 1 4 -1 -1 1 1 (1 /4) "-12"] \\*

&[-1 -1 2 -1 1 1 1 1 2 "-14"] \\*

&[1 -2 1 1 1 1 1 1 (1 /2) "-16"] \\*

&[1 1 1 1 1 1 1 1 1 "-18"]
\tabularnewline \hline 
link7a[3]
&[1 1 1 1 1 1 1 1 1 "14"] \\*

&[1 1 1 1 1 1 1 2 2 "12"] \\*

&[1 1 1 1 1 1 2 -1 (1 /2) "10"] \\*

&[1 1 1 1 -1 -4 1 -1 4 "8"] \\*

&[1 1 1 -1 -2 1 1 1 (1 /2) "6"] \\*

&[1 1 1 2 -1 -1 1 -1 2 "4"] \\*

&[1 -1 -1 1 -1 -1 1 1 1 "2"] \\*

&[1 -2 1 1 1 1 1 1 2 "0"] \\*

&[1 1 1 1 1 1 1 1 1 "-2"]
\tabularnewline \hline 
link7a[4]
&[1 1 1 1 1 1 1 1 1 "12"] \\*

&[1 1 1 1 1 1 -1 -2 2 "10"] \\*

&[1 1 1 -1 -1 1 -2 -1 (1 /2) "8"] \\*

&[1 -1 1 -1 1 2 1 -1 2 "6"] \\*

&[-1 -1 1 1 4 -1 1 1 (1 /4) "4"] \\*

&[1 1 -1 -2 1 1 1 1 2 "2"] \\*

&[-1 -1 -1 -1 1 1 1 1 1 "0"] \\*

&[-1 -2 1 1 1 1 1 1 2 "-2"] \\*

&[1 1 1 1 1 1 1 1 1 "-4"]
\tabularnewline \hline 
link7a[5]
&[1 1 1 1 1 1 1 1 1 "4"] \\*

&[1 1 1 1 1 1 1 2 (1 /2) "2"] \\*

&[1 1 1 1 1 1 -2 -1 2 "0"] \\*

&[1 1 1 -1 -1 -2 1 1 (1 /2) "-2"] \\*

&[1 1 1 1 -4 -1 1 -1 4 "-4"] \\*

&[1 1 1 -2 1 1 1 1 (1 /2) "-6"] \\*

&[1 1 2 1 1 1 1 1 2 "-8"] \\*

&[-1 -2 1 1 1 1 1 1 (1 /2) "-10"] \\*

&[1 1 1 1 1 1 1 1 1 "-12"]
\tabularnewline \hline 
link7a[6]
&[1 1 1 1 1 1 1 1 1 "14"] \\*

&[1 1 1 1 1 1 -1 2 2 "12"] \\*

&[1 1 1 1 1 1 2 -1 (1 /2) "10"] \\*

&[1 1 1 -1 1 -2 -1 1 2 "8"] \\*

&[1 1 1 -1 2 1 -1 1 (1 /2) "6"] \\*

&[1 1 1 2 1 -1 1 -1 2 "4"] \\*

&[1 -1 1 -1 -1 -1 1 1 1 "2"] \\*

&[1 2 1 -1 1 1 1 1 2 "0"] \\*

&[1 1 1 1 1 1 1 1 1 "-2"]
\tabularnewline \hline 
link7a[7]
&[1 1 1 1 1 1 1 1 1 "7"] \\*

&[1 1 1 1 1 1 -1 2 2 "5"] \\*

&[1 1 1 1 1 -1 4 -1 (1 /4) "3"] \\*

&[1 1 1 1 -1 2 1 1 2 "1"] \\*

&[1 1 -1 1 1 1 1 -1 1 "-1"] \\*

&[1 -1 -1 -8 -1 -1 1 1 8 "-3"] \\*

&[-1 -1 -2 1 1 1 1 1 (1 /2) "-5"] \\*

&[-1 -1 1 1 1 1 1 1 1 "-7"] \\*

&[1 1 1 1 1 1 1 1 1 "-9"]
\tabularnewline \hline 
link7n[1]
&[1 1 1 1 1 1 1 1 1 "-4"] \\*

&[1 1 1 1 -1 1 -1 -1 1 "-6"] \\*

&[1 1 -1 1 1 1 -1 1 1 "-8"] \\*

&[1 -1 1 -1 1 2 1 1 (1 /2) "-10"] \\*

&[1 1 1 1 -1 -1 1 1 1 "-12"] \\*

&[-1 -1 -1 2 -1 1 1 1 (1 /2) "-14"] \\*

&[1 -1 -1 1 1 1 1 1 1 "-16"]
\tabularnewline \hline 
link7n[2]
&[1 1 1 1 1 -1 1 1 1 "0"] \\*

&[1 1 -1 1 -1 1 -1 1 1 "-2"] \\*

&[1 -1 -1 -1 -2 1 -1 1 2 "-4"] \\*

&[-1 1 1 -2 -1 1 1 1 (1 /2) "-6"] \\*

&[-1 -1 -1 -1 1 1 1 1 1 "-8"] \\*

&[-1 -2 -1 1 1 1 1 1 (1 /2) "-10"] \\*

&[1 1 1 1 1 1 1 1 1 "-12"]
\tabularnewline
\hline
\end{longtable}

\subsection{Remarks from the computations}

Consider the trefoil knot (knot $3_{1}$ in the Rolfsen table \cite{Rolfsen2003}).
From the Knot Atlas \cite{Bar-Natan2005} we obtain the following
information about the Khovanov homology groups for this knot, when
homology is computed over $\mathbb{Z}$:

\begin{longtable}{|l|c|}
\hline
\multicolumn{2}{|c|}{{\tablename} \thetable{}: Homology groups for the trefoil}
\addcontentsline{lot}{table}{\thetable{} \quad Homology groups for the trefoil}\\
\hline
\hline
Subcomplex&
Homology\tabularnewline
\hline
\endfirsthead
\hline

\multicolumn{2}{|c|}{{\tablename} \thetable{} -- continued}\\
\hline
\hline
\endhead
 
-1   &  $\mathbb{Z}$ \\
-3   &  $\mathbb{Z}$ \\
-5   &  $\mathbb{Z}$ \\
-7   &  $\mathbb{Z}_{2}$ \\
-9   &  $\mathbb{Z}$ \\
\hline
\end{longtable}

The subcomplex of degree $-7$, has homology $\mathbb{Z}_{2}$, a
torsion group and it contains no free components. In fact over $\mathbb{Q}$,
this subcomplex is acyclic but its Reidemeister torsion (see Section
\ref{sec:R-torsion-knots}) is given by:

\begin{verbatim}
[1 1 1 1 1 "-1"] 
[1 1 1 1 1 "-3"] 
[1 -1 1 1 1 "-5"] 
[1 -2 1 1 (1 /2) "-7"] 
[-1 1 1 1 1 "-9"]
\end{verbatim} 

Note, that the acyclic subcomplex of degree $-7$, recovers the algebraic
torsion exhibited when the homology is calculated over $\mathbb{Z}$.
In fact, the same holds true for all the knots and links in Sections
\ref{sec:R-torsion-knots} and \ref{sec:R-torsion-links}, due to
the choices made for the homology groups.

\end{document}